\documentclass[11 pt,oneside,reqno,a4paper]{amsart} 
\usepackage{amsfonts,amssymb,amscd,amsmath, enumerate, verbatim, calc} 
\usepackage{float}
\usepackage{amsthm}
\newtheorem{theorem}{Theorem}[section] 
\newtheorem{definition}{Definition}[section]
\usepackage[all]{xy}
\usepackage{tikz-cd}
\newtheorem{lemma}[theorem]{Lemma}
\newtheorem{corollary}[theorem]{Corollary}
\newtheorem{proposition}[theorem]{Proposition}
\newtheorem{remark}[theorem]{Remark}
\usepackage{mathtools}
\newtheorem{problem}[theorem]{Problem}
\newtheorem{example}[theorem]{Example}
\numberwithin{equation}{section}
\usepackage[pagewise]{lineno}
\usepackage{amsfonts}
\newcommand{\field}[1]{\mathbb{#1}}          
\usepackage{cite}
\usepackage {hyperref}

\newcommand{\R}{\field{R}}                   
                   
\newcommand{\F}{\field{F}}

\renewcommand{\ker}{\textnormal{ker}}

\begin{document}
	
	\title{Theory of Extension of Commutative Poisson Algebra}
		\author{ Neeraj Kumar Maurya$^{1}$, Deepak Pal$^{2},$ and Sumit Kumar Upadhyay$^{3}$ \vspace{.4cm}\\
				{Department of Applied Sciences,\\ Indian Institute of Information Technology Allahabad\\Prayagraj, U. P., India} }
	
		\thanks{$^1$neerajkr2699@gmail.com, $^2$deepakpal5797@gmail.com, $^3$upadhyaysumit365@gmail.com}
		\keywords{Poisson algebra, extensions theory, factor system, second cohomology group.}
    \thanks{2020 Mathematics Subject classification: 17B63, 17B56, 17B40.  }
    
	\thanks{Corresponding author: Deepak Pal.}
	\maketitle
    \textbf{Abstract}. In this paper, we develop the extension theory of commutative Poisson algebras. Specifically, we define the second cohomology of a commutative Poisson algebra $K$ with coefficients in an abelian Poisson algebra $H$. We then establish a natural bijective correspondence between this cohomology group and the set of equivalence classes of a special type of extensions.
    
\section{Introduction} 
A Poisson algebra $P$ over a field $\mathbb{F}$ is an associative algebra over $\mathbb{F}$ equipped with a Lie bracket $[-, -]$ satisfying the Leibnitz rule.
The origins of Poisson algebras lie in classical mechanics, where the Poisson bracket, introduced by Siméon Denis Poisson $(1809)$, was developed as a bilinear operation on smooth functions in phase space. In the study of Hamiltonian mechanics and mathematical physics, the Poisson algebra plays a crucial role. Numerous studies have been conducted by researchers in various areas of mathematics, including abstract algebra, representation theory, algebraic geometry, and differential geometry, on the subject of Poisson algebra. Today, Poisson algebras serve as a unifying framework in which Lie algebra theory interacts with commutative algebra, providing essential tools for studying deformation theory, representation theory, and the transition from classical to quantum mechanics.

Apart from their significant applications in the aforementioned fields, Poisson algebras have attracted considerable attention from an algebraic perspective owing to their rich structural properties and intrinsic mathematical interest. Consequently, the theory of Poisson algebras has been extensively developed in the literature; see \cite{AM0, BY, BY1, CM, CM1, PC, JM, FL, GR, DA, FK, FK1, LW, ML, UU} and the references therein. Extension theory has long been recognized as a fundamental area of research in algebra, playing a crucial role in the construction, analysis, and classification of algebraic structures. Since the pioneering work of Eilenberg and MacLane on group extensions and cohomology \cite{EM}, extension theory has subsequently been developed for a wide variety of algebraic systems, including groups \cite{B1}, Lie algebras \cite{HS, NR}, associative algebras \cite{MG}, Leibniz algebras \cite{LP}. These developments have employed powerful tools such as cohomology, crossed products, semidirect products, bicrossed products, and unified products to classify and construct algebraic extensions. In the context of Poisson algebras, Agore and Militaru \cite{AM,AM1} developed a comprehensive framework for extension theory by introducing crossed and unified products together with suitable non-abelian cohomological invariants for the classification of extensions. Their work provides a systematic approach to the construction and classification of Poisson and Jacobi algebra extensions.

There is a concept of multiplicative Lie algebra, which was introduced by Ellis \cite{GJ}  in 1993. It generalizes the concepts of group and Lie algebra. Since every Poisson algebra is a Lie algebra, and every Lie algebra is a multiplicative Lie algebra, it follows that every Poisson algebra naturally gives rise to a multiplicative Lie algebra structure. In this article, we attempt to study the extension theory of Poisson algebras to tackle the problem of classification of all Poisson algebras $P$ (up to isomorphism) with a given ideal $H$ such that $P/H \cong K$, similar to the study of the extension theory of groups \cite{RL1},  Lie algebras \cite{RL} and multiplicative Lie algebras \cite{MS}. 

In this paper, we extend these ideas to commutative Poisson algebras, where the presence of three binary operations makes the theory more intricate than in the case of Lie algebras. We establish an equivalence between extensions and factor systems of commutative Poisson algebras, and investigate the existence of ring center extensions associated with a homomorphism  $\Gamma: K \to OutDer(H).$ Furthermore, we introduce a second cohomology group tailored to this setting and prove a bijective correspondence between it and the set of equivalence classes of ring center extensions. 

\section{Preliminaries}
    This section reviews fundamental notions of commutative Poisson algebras. We refer \cite{VR} for the fundamentals of the Poisson algebra. 
\begin{definition}
A vector space $V$ over a field $\mathbb{F}$ with a binary operation $\cdot$ on $V$ is called an associative $\F$-algebra over $\F$ if the following conditions hold for all  $x,y,z \in V,$ and $\alpha \in \F :$
\begin{enumerate}
\item $(x \cdot y) \cdot z = x \cdot (y \cdot z),$ 
\item $(x+y) \cdot z = x \cdot z + y \cdot z,$ 
\item $\alpha(x \cdot y) = (\alpha x) \cdot y = x \cdot (\alpha y).$
\end{enumerate}
\end{definition}
\begin{definition}
Let $H$ and $K$ be vector spaces over a field $\mathbb{F}$.  
\begin{enumerate}
    \item A bilinear map $f: K \times K \to H$ is a function satisfying  
    \[
    f(\alpha x + \beta y, z) = \alpha f(x,z) + \beta f(y,z)
    \]
    and
    \[
    f(x, \alpha y + \beta z) = \alpha f(x,y) + \beta f(x,z)
    \]
    for all $x,y,z \in K$ and $\alpha, \beta \in \mathbb{F}$.

    \item A bilinear map $f$ is symmetric if $f(x,y) = f(y,x)$ for all $x,y \in K$.

    \item A bilinear map $h: K \times K \to H$ is alternating if $h(x,x) = 0$ for all $x \in K$.
\end{enumerate}
\end{definition}

\begin{definition} A Lie algebra  is a vector space $L$ over a field $\mathbb{F}$, together with a binary operation $[\ ,\ ]: L \times L \rightarrow L$ called the Lie bracket, satisfying the following identities for all $x,y,z \in L,$ and $\alpha,\beta \in\mathbb{F:}$ 
\begin{enumerate}
    \item  $[\alpha x + \beta y, z]= \alpha[x,z] + \beta[y,z]$, and	\\ $[x,\alpha y+\beta z]= \alpha [x,y] + \beta [x,z]$,
\item   $[x,x]=0$,
\item  $[x,[y,z]] + [y,[z,x]] + [z,[x,y]]=0$.
\end{enumerate}
\end{definition}
	
\begin{definition} \label{D1.3} 
A commutative Poisson algebra $P$ is a vector space  over a field $\mathbb{F}$, with two binary operations $\cdot$ and $[\ ,\ ]$ such that the following conditions hold:
\begin{enumerate}
			\item $(P,\cdot)$ is a commutative associative $\mathbb{F}$-algebra.
			\item $(P,[\ ,\ ])$ is a Lie algebra over $\mathbb{F}$ (where $[\ ,\ ]$ is called a Lie bracket).
			\item $[x\cdot y,z]=[x,z]\cdot y + x\cdot [y,z]$ for all $x,y,z$ in $P$.
		\end{enumerate}
		The Lie bracket is then called a Poisson bracket.
	\end{definition}
	\textbf{Note:} The Poisson algebra $P$ also satisfies the condition $$ [x, y \cdot z]=[x,y]\cdot z + y\cdot [x,z]$$ for all $x,y,z$ in $P.$  
         
	\begin{definition} 
	 Let $P$ be a commutative Poisson algebra.
		\begin{enumerate}
			\item  A subspace $H$ of $P$ is said to be a subalgebra of $P$ if $HH\subseteq H$ and $[H, H]\subseteq H.$  
			\item A subalgebra $H$ of $P$ is said to be an ideal of $P$ if $HP\subseteq H$ and $[H, P]\subseteq H.$
            
			\item The Poisson center of $P$ is defined by $$Z(P) = \{ \ x\in P \ | \ xy = [x, y] = 0, \ \text{for all }y \in P  \ \}.$$It is an ideal of $P.$
			\item A commutative Poisson algebra $P$ is said to be an abelian Poisson algebra if $P$ is a commutative Poisson algebra with a trivial product and a trivial bracket operation, that is, $PP = 0 = [P, P] $. Hence, a commutative Poisson algebra $P$ is said to be abelian if and only if $P=Z(P)$.
			   
		\end{enumerate}
	\end{definition}
\begin{definition} 
\begin{enumerate}
\item A short exact sequence $$\mathcal{E}(H, K) \equiv	{0}\longrightarrow  H\overset{\alpha} \longrightarrow P\overset{\beta} \longrightarrow  K \longrightarrow {0}$$ of commutative Poisson algebras is called an extension of $H$ by $K$. A vector space homomorphism $t: K \to P$ is called a section of  $\mathcal{E}(H, K)$ if $\beta \circ t= I_K$.
    \item A morphism from an extension $$\mathcal{E}(H, K) \equiv	{0}\longrightarrow  H\overset{\alpha} \longrightarrow P\overset{\beta} \longrightarrow  K \longrightarrow {0}$$ to an extension $$\mathcal{E'}(H', K') \equiv	{0}\longrightarrow  H'\overset{\alpha'} \longrightarrow P'\overset{\beta'} \longrightarrow  K' \longrightarrow {0}$$ is a triple $(\lambda, \mu, \nu)$, where $\lambda: H \to H'$, $\mu: P \to P'$ and $\nu: K \to K'$ are Poisson algebra homomorphisms such that the following diagram 
$$ 	
	\xymatrix{\mathcal{E}(H, K) \equiv 0 \ar[r] & H \ar[d]_-{\lambda} \ar[r]^-{i} & P \ar[d]_-{\mu} \ar[r]^-{\beta} & K \ar[d]^-{\nu} \ar[r] & 0 \\
		\mathcal{E'}(H', K') \equiv  0 \ar[r] & H' \ar[r]_-{i'} & P' \ar[r]_-{\beta'} & K' \ar[r] & 0
	}$$
    is commutative.
    \item Two extensions $\mathcal{E}(H, K)$ and $\mathcal{E'}(H, K)$ of $H$ by $K$ are equivalent if there is an isomorphism $\phi: P \to P'$ such that the following diagram
$$ 	\xymatrix{\mathcal{E}(H, K) \equiv 0 \ar[r] & H \ar[d]_-{I_H} \ar[r]^-{i} & P \ar[d]_-{\phi} \ar[r]^-{\beta} & K \ar[d]^-{I_K} \ar[r] & 0 \\
		\mathcal{E'}(H, K) \equiv  0 \ar[r] & H \ar[r]_-{i'} & P' \ar[r]_-{\beta'} & K \ar[r] & 0 }$$
    is commutative.
\end{enumerate}
\end{definition}
\begin{remark}
    Let $P$ be a commutative Poisson algebra and $End(P)$ be the set of all vector space homomorphisms on $P$. Then $End(P)$ is a Poisson algebra with respect to the binary operation $\cdot$ and $[ \ , \ ]$ defined by $ (f_1 \cdot f_2)(h) = f_1(f_2(h))$ and $[f_1,f_2](h)=f_1 (f_2(h))- f_2 (f_1(h)) $.
\end{remark}
    
\begin{definition} Let $P$ be a commutative Poisson algebra.
\begin{enumerate}
 \item If $P^{\prime}$ is another commutative Poisson algebra with binary operations $\cdot^{\prime}$ and $[\ ,\ ]^{\prime}$. A vector space homomorphism $\phi: P \to P^{\prime}$ is called a Poisson algebra homomorphism if $$\phi(x\cdot y) = \phi(x)\cdot^{\prime}\phi( y) \ \ \text{and} \ \ \ \phi([x, y]) =  [\phi(x),  \phi(y)]^{\prime} \ \ \ \text{for all} \ x,y \in P.$$  
    \item A vector space homomorphism $D : P \to P$ is called a Poisson derivation of  $P$ if  
    \begin{center}
         $D(xy) = D(x)y + xD(y), \ \   D([x, y]) = [D(x), y] + [x, D(y)] \ \ \text{for all}\  x, y \in  P.$ 
    \end{center}
   It is easy to see that the set $Der(P)$ of all Poisson derivations is a Lie subalgebra of $End(P)$. Moreover, $Der(P)$ itself carries the structure of a commutative Poisson algebra under the trivial ring structure. 
\end{enumerate}
\end{definition}
    	
	\section{Extension Theory of Commutative Poisson Algebra}
A fundamental problem in the theory of commutative Poisson algebras is to classify all such algebras $P$ that contain a given ideal $H$ with the property that the quotient $P/H$ is isomorphic to a prescribed algebra $K$, where both $H$ and $K$ are commutative Poisson algebras over the field $\mathbb{F}$. Motivated by the analogy with Lie algebras, we approach this classification problem through the framework of extension theory. This section is devoted to a review of the general extension theory of commutative Poisson algebras.\\
	Consider an extension $ \mathcal{E}(H, K) \equiv	{0}\longrightarrow  H\overset{i} \longrightarrow P\overset{\beta} \longrightarrow  K \longrightarrow {0}$ of $H$ by $K,$  where $H$ and $K$ are commutative Poisson algebras.
   % with $(H, \cdot) \subseteq Z(G, \cdot).$
 Let $t: K \to P$  be a section of $\mathcal{E}(H, K)$. Then, it is easy to see that every element of the group $P$ can be uniquely expressed in the form $h + t(x),$ for some $h\in H$ and $x\in K.$
	
	The group operation $``+"$ and scalar multiplication in $P$ is given by
	\begin{align}
	     \big(h + t(x)\big) + \big(k+t(y)\big)=(h+k) + t(x+y)
	\end{align}
	   \begin{align}
          \alpha\big(h + t(x)\big) =  \alpha h + t(\alpha x)
           \end{align}
	for all $h, k \in H, \ x, y \in K$ and $\alpha \in \mathbb{F}.$
	
	Since $\beta (ht(x)) =  0,$  we have $ ht(x)\in \ker(\beta). $ Thus, for all $x \in K,$ there is a map $\gamma^t_x $ from $H$ to $H$ defined as $\gamma^t_x(h) = ht(x) $. It is easy to see that the map $\gamma^t_x$ is a vector space homomorphism. Also,  $\beta (t(x) t(y)) = x y = \beta(t(x y)),$ we have in consequence $t(x) t(y) - t(x y) \in H. $ Hence, there exists a map $f^t : K \times K \to H$ such that
	\begin{align}
	  f^t(x, y)= t(x) t(y) - t(x y). \label{2.3}   
	\end{align}
	Since $t(0) = 0,$ we have
	\begin{align}
	    f^t(x, 0) = f^t(0, y) =  f^t(0, 0) = 0 \ \text{and} \  f^t(x, y) =  f^t(y, x).\label{2.4}
	\end{align}
Since $t$ is a vector space homomorphism,
	\begin{align*}
		f^t(\alpha x + \beta y, z) &= t(\alpha x + \beta y) t(z) - t\big((\alpha x + \beta y) z\big) \\& = \alpha t(x) t(z) + \beta t(y) t(z) - \alpha t(xz) - \beta t(yz) \\& = \alpha f^t(x, z) + \beta f^t(y, z).
	\end{align*}
	Finally, we have
	\begin{align}
		f^t(\alpha x + \beta y, z) = \alpha f^t(x, z) + \beta f^t(y, z).\label{2.14} 
	\end{align}
	Similarly, 
	\begin{align}
		f^t( x, \alpha y + \beta z) = \alpha f^t(x, y) + \beta f^t(x, z).\label{2.15} 
	\end{align}

	Now, the ring structure $``\cdot"$ in $P$ is given by:
	\begin{align}
		\big(h + t(x)\big) \cdot \big(k+t(y)\big) & = hk + ht(y) + t(x)k + t(x)t(y) \notag \\  \implies  \big(h + t(x)\big) \cdot \big(k+t(y)\big) & = hk + \gamma^t_y(h) + \gamma^t_x(k) + f^t(x, y) + t(xy). \label{2.5}  
	\end{align} 

	Furthermore, $\beta ([t(x), h]) = [\beta (t(x)), \beta(h)] = [x, 0] = 0,$  we have $ [t(x), h] \in \ker(\beta).$ Thus, for all $x \in K,$ there is a map $\Gamma^t_{x} : H \to H$ defined by $\Gamma_{x}^t(h) = [t(x), h].$ It can be easily seen that, $\Gamma^t_{x}$ is a vector space homomorphism on $H$.
	
	Since $\beta$ is a Poisson algebra homomorphism, we have $$\beta (t([x, y])) = [x, y] = [\beta(t(x)), \beta(t(y))] = \beta ([t(x), t(y)]) $$ for all $x, y \in K.$ Thus, $[t(x), t(y)]-t([x, y]) \in H$.  Hence, we have a map $h^t$ from $K \times K$ to $H$ defined as
	\begin{align}
		h^t(x, y) = [t(x), t(y)] - t([x, y]). \label{2.6}
	\end{align}  
	 Since $[t(x), t(x)] = 0 = t([x, x]),$ and $t(0) = 0,$  we have 
	\begin{align}
		h^t(0, x) = h^t(x, 0) = h^t(x, x) = 0. \label{2.7} 
	\end{align} 
Also,
	\begin{align}
		h^t(\alpha x + \beta y, z) &= [t(\alpha x + \beta y), t(z)] - t([\alpha x + \beta y, z]) = \alpha h^t(x, z) + \beta h^t(y, z).\label{2.16}  
	\end{align}
	Similarly,
	\begin{align}
		h^t(x, \alpha y + \beta z) &= \alpha h^t(x, y) + \beta h^t(x, z).\label{2.17}  
	\end{align}
	for all $x, y, z \in K$ and $\alpha, \beta \in \mathbb{F.}$

	Now, the Poisson bracket $``[ \  , \ ]"$ in $P$ is given by
	\begin{align}
		[h+t(x), k+t(y)] & = [h, k] + [h, t(y)] + [t(x), k] + [t(x), t(y)] \notag \\ \implies [h+t(x), k+t(y)] & = [h, k] - \Gamma^t_{y}(h) + \Gamma^t_{x}(k) + h^t(x, y) + t([x, y]). \label{2.8}
	\end{align}   

\begin{lemma}
		$(1)$	The vector space homomorphism $\gamma^t_x : H \to H $ defined by $\gamma^t_x(h) = ht(x)$ satisfies the following identities for all $x,y \in K$ and $h, k\in H:$
        \begin{equation} \label{2.9}
            \begin{rcases}
                \gamma^t_{xy}(h)=\gamma^t_y(\gamma^t_x(h)) - hf^t(x,y)=\gamma^t_x(\gamma^t_y(h)) - hf^t(x,y),\\ \gamma^t_{[x, y]}(h) = \Gamma^t_y(\gamma^t_x(-h)) + \gamma^t_x(\Gamma^t_y(h)) - hh^t(x,y), \\ \gamma_{x}^t(hk)= h\gamma_{x}^t(k)=k \gamma_{x}^t(h),  \\ \gamma_{x}^t([h,k])=[\gamma_{x}^t(h),k] - h  \Gamma_x^t(k).
            \end{rcases}
        \end{equation}

		$(2)$ The vector space homomorphism $\Gamma^t_x : H \to H $ defined by $\Gamma^t_x(h) = [t(x), h]$ is a Poisson algebra derivation. Also, satisfies the following identities for all $x,y \in K$ and $h, k\in H:$
        \begin{equation}\label{2.10}
         \begin{rcases}
           \Gamma^t_{xy}(h) = \gamma^t_y(\Gamma^t_x(h)) + \gamma^t_x(\Gamma^t_y(h)) + [h, f^t(x, y)],\\
            \Gamma ^t_{[x, y]} (h) = \Gamma^t_x \Gamma^t_y(h) - \Gamma^t_y \Gamma^t_x(h) - [h^t(x, y), h].
         \end{rcases}  
        \end{equation}
	\end{lemma}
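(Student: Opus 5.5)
The approach is to compute directly inside $P$. Since $H=\ker\beta$ is an ideal of $P$, every expression $ht(x)$, $[t(x),h]$, $f^t(x,y)$ and $h^t(x,y)$ lies in $H$. So each identity can be checked by expanding the defining quantity in $P$ and rewriting it with the axioms of Definition \ref{D1.3}: associativity and commutativity of $\cdot$, antisymmetry and the Jacobi identity of $[\ ,\ ]$, and the Leibniz rule together with the Note following the definition. The two relations we use repeatedly are $t(xy)=t(x)t(y)-f^t(x,y)$ from \eqref{2.3} and $t([x,y])=[t(x),t(y)]-h^t(x,y)$ from \eqref{2.6}.

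For part $(1)$, the identities follow in order.
\begin{enumerate}
\item First, $\gamma^t_{xy}(h)=h\,t(xy)=h\,t(x)t(y)-hf^t(x,y)$. By associativity this equals $\gamma^t_y(\gamma^t_x(h))-hf^t(x,y)$, and by commutativity it also equals $\gamma^t_x(\gamma^t_y(h))-hf^t(x,y)$.
\item Next, $\gamma^t_{[x,y]}(h)=h[t(x),t(y)]-hh^t(x,y)$. To handle $h[t(x),t(y)]$, apply the Leibniz rule to $[h\,t(x),t(y)]=[h,t(y)]t(x)+h[t(x),t(y)]$. This gives
\[
h[t(x),t(y)]=[\gamma^t_x(h),t(y)]+\Gamma^t_y(h)\,t(x)=\Gamma^t_y(\gamma^t_x(-h))+\gamma^t_x(\Gamma^t_y(h)),
\]
where we use $[k,t(y)]=-\Gamma^t_y(k)$ and the linearity of $\Gamma^t_y$.
\item The identity $\gamma^t_x(hk)=h\gamma^t_x(k)=k\gamma^t_x(h)$ is just associativity and commutativity.
\item Finally, the Leibniz rule $[h\,t(x),k]=[h,k]t(x)+h[t(x),k]$ rearranges to $\gamma^t_x([h,k])=[\gamma^t_x(h),k]-h\Gamma^t_x(k)$.
\end{enumerate}

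For part $(2)$, we first show that $\Gamma^t_x$ is a Poisson derivation of $H$. The Note gives $[t(x),hk]=[t(x),h]k+h[t(x),k]$, and the Jacobi identity in $P$ gives $[t(x),[h,k]]=[[t(x),h],k]+[h,[t(x),k]]$. Both right-hand sides stay in $H$ because $H$ is an ideal. For the first identity in \eqref{2.10}, write $\Gamma^t_{xy}(h)=[t(x)t(y),h]-[f^t(x,y),h]$ and expand $[t(x)t(y),h]$ by the Leibniz rule as $[t(x),h]t(y)+t(x)[t(y),h]=\gamma^t_y(\Gamma^t_x(h))+\gamma^t_x(\Gamma^t_y(h))$. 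Antisymmetry then turns $-[f^t(x,y),h]$ into $[h,f^t(x,y)]$. For the second identity, write $\Gamma^t_{[x,y]}(h)=[[t(x),t(y)],h]-[h^t(x,y),h]$. The Jacobi identity, in the form $[[a,b],h]=[a,[b,h]]-[b,[a,h]]$, identifies the first term as $\Gamma^t_x\Gamma^t_y(h)-\Gamma^t_y\Gamma^t_x(h)$.

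None of these steps is conceptually hard. The main obstacle is sign bookkeeping in the two places where a bracket $[h,t(y)]$ has to be rewritten as $-\Gamma^t_y(h)$: the $\gamma^t_{[x,y]}$ identity and the derivation property. There we must choose the correct form of the Leibniz rule (the one in Definition \ref{D1.3} versus the one in the Note) so that every term lands in the stated shape. We also use throughout that the products $hk$, $hf^t$ and $hh^t$ are computed in $H$, which is valid because $H$ is a subalgebra.
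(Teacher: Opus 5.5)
Your proposal is correct and follows essentially the same route as the paper. Both expand directly in $P$ using $t(xy)=t(x)t(y)-f^t(x,y)$ and $t([x,y])=[t(x),t(y)]-h^t(x,y)$. Both use the Leibniz rule $[h\,t(x),t(y)]=[h,t(y)]t(x)+h[t(x),t(y)]$ for the $\gamma^t_{[x,y]}$ identity and the Jacobi identity for the derivation property and the $\Gamma^t_{[x,y]}$ identity, with the signs coming out exactly as in the paper's computation.
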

	\begin{proof}
		$(1)$ Let $x,y \in K$ and $h, k\in H.$ Then 
		\begin{align*} 
		\gamma^t_{xy}(h) &= ht(xy) \\
			&= h\big(t(x)t(y)- f^t(x,y)\big) \\
			&=ht(x)t(y)- hf^t(x,y) \\&=\gamma^t_y(\gamma^t_x(h)) -h f^t(x,y) \\&=\gamma^t_x(\gamma^t_y(h)) - hf^t(x,y)
		\end{align*}
        
		and, 	\begin{align*}
			\gamma^t_{[x, y]}(h) &= ht([x, y]) \\
			&= h\big([t(x), t(y)]- h^t(x,y)\big) \\
			&=h[t(x), t(y)]- hh^t(x,y)\\
			&=[ht(x), t(y)] - [h, t(y)]t(x)- hh^t(x,y)\\
			&= \Gamma^t_y(\gamma^t_x(-h)) + \gamma^t_x(\Gamma^t_y(h)) - hh^t(x,y). 
           \end{align*}
 Also, \begin{align*}
      \gamma_{x}^t(hk) = hkt(x) =h \gamma_{x}^t(k) = k \gamma_{x}^t(h)   \end{align*}
and, \begin{align*}
     \gamma_{x}^t([h,k])=[h,k]t(x)=[h t(x),k] - h [t(x),k]= [\gamma_x^t(h),k]- h \Gamma_x^t(k).
    \end{align*}
            
		$(2)$ Let $h, k\in H$ and $x, y \in K .$ Then
		\begin{align*}
			\hspace{4cm}\Gamma^t_x([h, k]) &= [t(x), [h, k]] \\& = [[t(x), h], k] + [h, [t(x), k]] \ \ \text{(by the Jacobi identity)} \\& = [\Gamma^t_x(h), k] + [h, \Gamma^t_x(k)]. 
		 \end{align*}
		Also, \begin{align*}
			\hspace{-10cm}\Gamma^t_x(hk) &= [t(x), hk] \\& = [t(x), h]k + h[t(x), k] \\& = \Gamma^t_x(h) k + h \Gamma^t_x(k).
		\end{align*}
		Thus, $\Gamma^t_x$ is a Poisson algebra derivation.
		Now, \begin{align*}
			\Gamma^t_{xy}(h)  &= [t(xy), h] = [t(x)t(y) - f^t(x, y), h] = [t(x)t(y), h] + [h, f^t(x, y)] \\&= [t(x), h]t(y) + t(x)[t(y), h]+ [h, f^t(x, y)] \\&= \Gamma^t_{x}(h)t(y) + \Gamma^t_{y}(h) t(x) + [h, f^t(x, y)] \\& = \gamma^t_y(\Gamma^t_{x}(h)) + \gamma^t_x(\Gamma^t_{y}(h)) + [h, f^t(x, y)].  
		\end{align*}
		Thus, 
		\begin{align*}
		 \Gamma^t_{xy}(h)  = \gamma^t_y(\Gamma^t_{x}(h)) + \gamma^t_x(\Gamma^t_{y}(h)) + [h, f^t(x, y)]. 
		\end{align*}
        Since $[[t(x), t(y)], h] + [[t(y), h], t(x)] + [[h, t(x)], t(y)] = 0,$ we have
	\begin{align*}
		 \Gamma^t_{[x, y]} (h) = \Gamma^t_x \Gamma^t_y(h) - \Gamma^t_y \Gamma^t_x(h) - [h^t(x, y), h].   
	\end{align*}  
	\end{proof}
    
	\begin{lemma} \label{a}
		Let $H$ and $K$ be two commutative Poisson algebras. Then
	\end{lemma}
	\begin{enumerate}
		\item 	The map $\gamma^t$ from  $K$  to $End(H)$ defined by $\gamma^t(x) = \gamma^t_{x} $ is a vector space homomorphism. 
		\item 	The map $\Gamma^t: K \to  Der(H)$ defined by $\Gamma^t(x) = \Gamma^t_{x}$ is a vector space homomorphism.
	\end{enumerate}
	
	\begin{proof}
		
	$(1)$ Let $x, y \in K$, $h\in H,$ and $\alpha,\beta \in \mathbb{F}.$  Then
    \begin{align*}
       \gamma^t_{\alpha x+ \beta y}(h) &= ht(\alpha x+ \beta y) = ht(\alpha x) + ht(\beta y) \\&= \alpha ht(x)+ \beta ht(y) \\&=  \alpha \gamma^t_{x}(h) + \beta \gamma^t_{y}(h),  
    \end{align*}
    since $t$ is a vector space homomorphism. Hence, $$ \gamma^t{(\alpha x+ \beta y)} = \alpha \gamma^t(x) + \beta \gamma^t(y). $$  This completes the proof.\\
    $(2)$ Let $x, y \in K$, $h\in H$ and $\alpha, \beta \in \mathbb{F}.$ Then
    \begin{align*}
     \Gamma^t_{\alpha x+\beta y}(h) & = [t(\alpha x+\beta y), h] = [t(\alpha x) + t(\beta y), h] \\& = \alpha [t(x) , h]+\beta [ t(y), h] \\&=  \alpha \Gamma^t_{x}(h) + \beta \Gamma^t_{y}(h).   
    \end{align*}

Hence, $ \Gamma^t{(\alpha x+ \beta y)} = \alpha \Gamma^t(x) + \beta \Gamma^t(y). $ This completes the proof.
\end{proof}
	Now, we will see the properties of the function $f^t$ and $h^t$ by using equation $(\ref{2.5})$ and $ (\ref{2.8}).$ \\
	Consider the expression
	\begin{align}
		\big(t(x)\cdot t(y)\big) \cdot t(z) &= \big(f^{t}(x,y)+ t(x y)\big) \cdot t(z) \notag\\&= \gamma^t_z ( f^{t}(x,y)) + t(x y)\cdot t(z) \notag\\&= \gamma^t_z( f^{t}(x,y)) + f^{t}(x y, z)+ t((x y) z).\label{2.11} 
	\end{align}
	On the other hand,
	\begin{align}
		 t(x)\cdot \big(t(y) \cdot t(z)\big) &= t(x) \cdot  \big(f^{t}(y,z)  + t(y z)\big) \notag\\&=  \gamma_x^t(f^{t}(y,z)) + t(x)\cdot t(y z) \notag\\&=  \gamma_x^{t}(f^{t}(y,z)) + f^{t}(x,y z)+ t(x(y z)). \label{2.12} 
	\end{align}
	By equating equation $(\ref{2.11})$ and $(\ref{2.12}),$ we have  
	\begin{align}
		\gamma_z^t( f^{t}(x,y)) +   f^{t}(x y, z) =  \gamma_x^{t}(f^{t}(y,z)) + f^{t}(x,y z). \label{2.13} 
	\end{align}

	Now, consider the Jacobi identity 
	\begin{align*}
		[[t(x), t(y)], t(z)] + [[t(y), t(z)], t(x)] + [[t(z), t(x)], t(y)] = 0.
	\end{align*}This implies that,
	\begin{align*}
		t\big([[x, y], z]\big) + t\big([[y, z], x]\big) + t\big([[z, x], y]\big) + h^t([x,y], z) - \Gamma^t_z(h^t(x, y)) + h^t([y, z], x) \\& \hspace*{-13cm} - \Gamma^t_x(h^t(y, z)) + h^t([z,x], y) - \Gamma^t_y(h^t(z, x)) =  0.
	\end{align*}
	Since $t$ is a vector space homomorphism, we have
	\begin{align}
		h^t([x,y], z) + h^t([y, z], x) + h^t([z,x], y) = \Gamma^t_z(h^t(x, y)) + \Gamma^t_x(h^t(y, z)) + \Gamma^t_y(h^t(z, x)).\label{2.18}    
	\end{align}

	Now, consider the expression
	\begin{align}
		[t(x)\cdot t(y), t(z)] &= [f^{t}(x,y)+ t(x y), t(z)] \notag\\& =  [t(x y), t(z)]  - \Gamma_z^t( f^{t}(x,y))  \notag\\&=  h^t(xy,z)+  t([xy,z]) -\Gamma_z^t(f^t(x,y)).\label{2.19} 
	\end{align}
	On the other hand,
	\begin{align}
		[t(x)\cdot t(y), t(z)] & = [t(x),t(z)] \cdot t(y) + t(x) \cdot [t(y),t(z)] \notag\\&= \big(h^t(x,z) + t([x,z])\big)  \cdot t(y) + t(x) \cdot \big(h^t(y,z) + t([y,z])\big) \notag\\&=  \gamma_y^t(h^t(x,z)) +   f^t([x,z],y) + t([x,z]y)  + \gamma_x^t(h^t(y,z)) \notag\\&
        \quad + f^t(x,[y,z]) + t(x[y,z]).\label{2.20} 
	\end{align}
	Now, equating equation $(\ref{2.19})$ and $(\ref{2.20}),$ we have 
	\begin{align}
		h^t(xy,z) & =  \Gamma_z^t(f^{t}(x,y)) +\gamma_y^t( h^t(x,z)) + f^t([x,z],y) + \gamma_x^t(h^t(y,z))+ f^t(x,[y,z]). \label{2.21}  
	\end{align}

	\begin{definition} \label{Fac}
		Let $H$ and $K$ be two commutative Poisson algebras. Then a factor system is a sextuple $(H, K, f, h, \gamma, \Gamma),$ where $f: K \times K \to H $ is a symmetric bilinear map and  $ h: K \times K \to H $ is a alternating map, and $\gamma: K \to End(H)$ and $\Gamma: K \to Der(H)$  are vector space homomorphisms satisfying equations $(\ref{2.9})$, $(\ref{2.10})$,  $(\ref{2.13})$, $(\ref{2.18})$, and $(\ref{2.21})$. 
	\end{definition}
	\begin{remark}
	    From the above discussion, every commutative Poisson algebra extension $\mathcal{E}(H, K)$ with a choice of a section $t,$ we have a factor system $Fac(\mathcal{E}, t) = (H, K,  f^t, h^t, $ $ \gamma^t, \Gamma^t) $ called as factor system given by the commutative Poisson algebra extension $\mathcal{E}(H, K).$
	\end{remark}
Now, we prove the following proposition:  
	
	\begin{proposition}\label{P2.3}
		Every extension $\mathcal{E}(H, K)$ of a commutative Poisson algebra with a choice of a section $t$ determines a factor system $Fac(\mathcal{E}, t) = (H, K,  f^t, h^t, \gamma^t, \Gamma^t), $ and conversely, for a given factor system $(H, K,  f, h, \gamma, \Gamma),$ there exists an extension $\mathcal{E}(H, K)$ of $H$ by $K$ with a section $t$ such that  $(H, K, f, h, \gamma, \Gamma) = Fac(\mathcal{E}, t).$      
	\end{proposition}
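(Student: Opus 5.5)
The first half is already done by the discussion preceding Definition \ref{Fac}. Given an extension $\mathcal{E}(H,K)$ and a section $t$, the maps $f^t,h^t,\gamma^t,\Gamma^t$ are well defined. Equations (\ref{2.4})--(\ref{2.17}) show that $f^t$ is symmetric bilinear and $h^t$ is alternating bilinear. Lemma \ref{a} shows that $\gamma^t$ and $\Gamma^t$ are linear, with values in $End(H)$ and $Der(H)$ respectively. The previous lemma gives (\ref{2.9}) and (\ref{2.10}), and the associativity, Jacobi and Leibniz computations give (\ref{2.13}), (\ref{2.18}) and (\ref{2.21}). So $Fac(\mathcal{E},t)$ is a factor system, and I would simply cite these results.

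For the converse, fix a factor system $(H,K,f,h,\gamma,\Gamma)$ and take $P=H\times K$ with the componentwise vector space structure. Guided by (\ref{2.5}) and (\ref{2.8}), define
\begin{align*}
(h_1,x)\cdot(k,y)&=\big(h_1k+\gamma_y(h_1)+\gamma_x(k)+f(x,y),\ xy\big),\\
[(h_1,x),(k,y)]&=\big([h_1,k]-\Gamma_y(h_1)+\Gamma_x(k)+h(x,y),\ [x,y]\big).
\end{align*}
Set $i(h_1)=(h_1,0)$, $\beta(h_1,x)=x$ and $t(x)=(0,x)$. Bilinearity of both operations follows from bilinearity of $f,h$, linearity of $\gamma,\Gamma$ and linearity of each $\gamma_x,\Gamma_x$. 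Commutativity of $\cdot$ follows from symmetry of $f$, and $[p,p]=0$ follows from $h(x,x)=0$. Once $P$ is shown to be a commutative Poisson algebra, $i$ and $\beta$ are visibly homomorphisms with $\ker\beta=i(H)$ and $\beta t=I_K$. A direct computation then gives $(h_1,0)\cdot t(x)=(\gamma_x(h_1),0)$, $[t(x),(h_1,0)]=(\Gamma_x(h_1),0)$, $t(x)t(y)-t(xy)=(f(x,y),0)$ and $[t(x),t(y)]-t([x,y])=(h(x,y),0)$. Hence $Fac(\mathcal{E},t)$ recovers the given system.

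The main work, and the expected obstacle, is verifying associativity, the Jacobi identity and the Leibniz rule in $P$. By trilinearity each identity can be checked on triples where every entry is either of the form $(h_1,0)$ or of the form $(0,x)$, so I would split into cases according to how many $K$-components occur.
\begin{itemize}
\item With three $K$-components, the identities reduce exactly to (\ref{2.13}), (\ref{2.18}) and (\ref{2.21}), by reversing the computations (\ref{2.11})--(\ref{2.20}).
\item With two $K$-components, they reduce to the first two lines of (\ref{2.9}) and to (\ref{2.10}).
\item With one $K$-component, they reduce to the last two lines of (\ref{2.9}) and to the fact that $\Gamma_x$ is a Poisson derivation.
\item With no $K$-components, they hold because $H$ is a commutative Poisson algebra.
\end{itemize}
I expect the Leibniz rule $[p\cdot q,r]=[p,r]q+p[q,r]$ to be the most delicate, since it is not symmetric in its arguments. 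Each placement of the $K$-components must be matched with the appropriate identity, possibly after using symmetry of $f$ and commutativity to rewrite $\gamma_y\gamma_x=\gamma_x\gamma_y$. If some placement is not literally covered by the listed identities, I would first try to derive it from them using the commutativity and antisymmetry already established.
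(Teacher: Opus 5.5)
Your proposal is correct and follows the same route as the paper: you cite the preceding computations for the forward direction, and for the converse you build $P=H\times K$ with exactly the paper's operations, inclusion $i$, projection and section $t(x)=(0,x)$. The paper only asserts that the Poisson axioms are ``easily verified''; your reduction to pure triples fills this in, and every placement is covered literally by (\ref{2.9}), (\ref{2.10}), (\ref{2.13}), (\ref{2.18}), (\ref{2.21}) and the derivation property of each $\Gamma_x$ — including $\gamma_x\gamma_y=\gamma_y\gamma_x$, which is already part of the first line of (\ref{2.9}) — so no extra derivation is needed.
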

	\begin{proof}
		From the above discussions, it can be seen that every extension $\mathcal{E}(H, K)$ of $H$ by $K$ with a given section $t$ determines a factor system $ Fac(\mathcal{E}, t) = (H, K,  f^t, h^t, \gamma^t, \Gamma^t).$\\ 
		Conversely, let $P = H \times K=\{(h,x) : h\in H, x \in K\}$. It can be easily verified that $P$ forms a commutative Poisson algebra associated with the following operations
        $$\alpha(h,x)=(\alpha h,\alpha x),$$
        $$(h,x)+ (k,y)=(h+k, x+y),$$
		$$(h, x)  \cdot (k, y) = \big(hk + \gamma_y(h) + \gamma_x(k) + f(x, y),\  xy\big),$$ 
		$$[(h, x), (k, y)]  =  \big([h, k] - \Gamma_{y}(h) + \Gamma_{x}(k) + h(x, y), \ [x, y]\big),$$ where $\alpha \in \F, h,k\in H,$ and $x,y \in K$. \\
        Also, $$ \mathcal{E}(H, K) \equiv	{0}\longrightarrow  H\overset{i} \longrightarrow P\overset{p} \longrightarrow  K \longrightarrow {0}$$ is an extension of $H$ by $K,$ where $i(h) = (h, 0) $ and $p(h, x) = x.$ Let $t$ be a section of $\mathcal{E}$ given by $t(x) = (0, x).$ By an easy computation, it can be seen that $(H, K,  f, h, \gamma, \Gamma) =  (H, K,  f^t, h^t, \gamma^t, \Gamma^t) $. 
	\end{proof}

\subsection{Equivalence between category EXT of extensions and category of FAC of factor systems} \ \ \

Let $(\lambda, \mu, \nu )$ be a morphism from the extension $\mathcal{E}(H_1,K_1)$ to the extension $\mathcal{E}(H_2,K_2)$. Then we have the following commutative diagram:

	$$ 	
	\xymatrix{\mathcal{E}(H_1, K_1) \equiv 0 \ar[r] & H_1 \ar[d]_-{\lambda} \ar[r]^-{i_1} & P_1 \ar[d]_-{\mu} \ar[r]^-{p_1} & K_1 \ar[d]^-{\nu} \ar[r] & 0 \\
		\mathcal{E}(H_2, K_2) \equiv  0 \ar[r] & H_2 \ar[r]_-{i_2} & P_2 \ar[r]_-{p_2} & K_2 \ar[r] & 0
	}$$
	
	Let $t_1$ and $t_2$ be sections of $\mathcal{E}(H_1, K_1)$ and $\mathcal{E}(H_2, K_2),$ respectively. Consider the corresponding factor systems $(H_1, K_1, f^{t_1}, h^{t_1}, \gamma^{t_1}, \Gamma^{t_1})$ and $(H_2, K_2, f^{t_2}, h^{t_2}, \gamma^{t_2}, \Gamma^{t_2})$ of $\mathcal{E}(H_1, K_1)$ and $\mathcal{E}(H_2, K_2),$ respectively. By the commutativity of the diagram, for each 
	$x\in K_1$ we have
	$$p_2(\mu (t_1(x)))=\nu (p_1 (t_1 (x))) = \nu (x) = p_2 (t_2(\nu (x))). $$  It follows that there exists a unique element $g(x) \in H_2$ satisfying
	\begin{align}
		\mu(t_1(x))=t_2(\nu (x)) + g(x) \label{2.22} 
	\end{align}
	for all $x \in K_1$. This defines a map $g$ from $K_1$ to $H_2$. Since $\mu, \nu, t_1, t_2$ are vector space homomorphisms, it follows that $g$ is a vector space homomorphism.
	Now, observe that
	\begin{align}
		\mu([t_1(x),t_1(y)]) &= \mu(t_1([x,y]) + h^{t_1}(x,y)) \notag\\
		&= \mu(t_1([x,y])) + \mu(h^{t_1}(x,y)) \notag\\
		&= t_2(\nu([x,y])) + g([x,y]) + \lambda(h^{t_1}(x,y)).\label{2.23}
	\end{align}
	Since $\mu$ is a Poisson algebra homomorphism, we have
	\begin{align}
		\mu([t_1(x),t_1(y)]) &= [\mu(t_1(x)),\mu(t_1(y))] \notag\\
		&\hspace{-2.1cm}= [t_2(\nu (x)) + g(x),t_2(\nu (y)) + g(y)] \notag\\
		&\hspace{-2.1cm}= [t_2(\nu (x)),t_2(\nu (y))] +[t_2(\nu (x)) ,g(y)] +[g(x),t_2(\nu (y))] +[g(x),g(y)] \notag\\
		&\hspace{-2.1cm}= t_2([\nu (x),\nu (y)]) 
		+ h^{t_2}(\nu(x),\nu(y)) + \Gamma^{t_2}_{\nu(x)}(g(y)) -\Gamma^{t_2}_{\nu(y)}(g(x))  + [g(x),g(y)]. \label{2.24}
	\end{align}
	Since $\nu$ is a Poisson algebra homomorphism, by comparing equations $(\ref{2.23})$ and $(\ref{2.24}),$ we obtain
	\begin{align}
		h^{t_2}(\nu(x),\nu(y)) + \Gamma^{t_2}_{\nu(x)}(g(y)) -\Gamma^{t_2}_{\nu(y)}(g(x)) + [g(x),g(y)] \notag\\& \hspace{-6.5cm} =g([x,y]) + \lambda(h^{t_1}(x,y)) \label{2.25}
	\end{align} for all $x,y \in K_1$. Further,
	\begin{align*}
		\lambda(\Gamma^{t_1}_{x}(h)) &= \lambda([t_1(x),h])\\ &= \mu([t_1(x),h]) \\
		&= [t_2(\nu (x)) + g(x),\mu(h)] \\
		&= [t_2(\nu (x)),\lambda(h)] + [g(x),\lambda(h)] \\
		&= \Gamma^{t_2}_{\nu(x)}(\lambda(h)) + [g(x),\lambda(h)],
	\end{align*}
	so that 
	\begin{align}
		\lambda(\Gamma^{t_1}_{x}(h))=\Gamma^{t_2}_{\nu(x)}(\lambda(h)) + [g(x),\lambda(h)]. \label{2.26}
	\end{align} 
	Now, 
	\begin{align}
		\mu(t_1(x) t_1(y)) &= \mu(f^{t_1}(x, y) + t_1(x y)) \notag\\
		&= \lambda(f^{t_1}(x,y)) + t_2(\nu (xy)) + g(xy). \label{2.27}
	\end{align}
	On the other hand, since $\mu$ is a Poisson algebra homomorphism. So, 
	\begin{align}
		\mu(t_1(x) t_1(y)) &= \mu(t_1(x))\cdot \mu(t_1(y)) \notag\\
		&\hspace{-1cm}= \big(g(x) + t_2(\nu (x))\big) \cdot \big(g(y) + t_2(\nu (y))\big) \notag\\
		&\hspace{-1cm}= g(x) \cdot g(y) + \gamma^{t_2}_{\nu(y)}(g(x)) + \gamma^{t_2}_{\nu(x)}(g(y)) + f^{t_2}(\nu(x),\nu(y)) + t_2(\nu(x)\nu(y)). \label{2.28}
	\end{align}
	Since $\nu$ is a Poisson algebra homomorphism, by comparing equations $(\ref{2.27})$ and $(\ref{2.28}),$ we get
	\begin{align}
		\lambda(f^{t_1}(x,y))+ g(xy) &= g(x)\cdot g(y) + \gamma^{t_2}_{\nu(y)}(g(x)) + \gamma^{t_2}_{\nu(x)}(g(y)) + f^{t_2}(\nu(x),\nu(y)). \label{2.29}
	\end{align}
	Next,
	\begin{align*}
		\lambda(\gamma^{t_1}_{x}(h)) &=
		\lambda(h \cdot t_1(x))\\ &= \lambda(h) \cdot \lambda(t_1(x)) \\
		&= \lambda(h) \cdot \mu(t_1(x))  \\
		&=\lambda(h) \cdot  \big(g(x) + t_2(\nu (x))\big) \\
		&= \lambda(h) \cdot g(x) + \gamma^{t_2}_{\nu(x)}(\lambda(h)). 
	\end{align*}
	Thus,
	\begin{align}
		\lambda(\gamma^{t_1}_{x}(h)) &=  \lambda(h) \cdot g(x) + \gamma^{t_2}_{\nu(x)}(\lambda(h)). \label{2.30}
	\end{align}
	Now, since $\mu$ is a Poisson algebra homomorphism. So,
	\begin{align}
		&\mu([t_1(x)\cdot t_1(y),t_1(z)]) = [\mu(t_1(x))\cdot \mu(t_1(y)),\mu(t_1(z))] \notag\\
		&= [\mu(t_1(x)),\mu(t_1(z))] \cdot \mu(t_1(y)) + \mu(t_1(x))\cdot [\mu(t_1(y)),\mu(t_1(z))] \notag\\
	 	&= \big(t_2([\nu (x),\nu (z)]) + h^{t_2}(\nu(x),\nu(z)) + \Gamma^{t_2}_{\nu(x)}(g(z)) -\Gamma^{t_2}_{\nu(z)}(g(x))  + [g(x),g(z)]\big) \notag\\& \quad \cdot \big(g(y) + t_2(\nu (y))\big) + \big(g(x) + t_2(\nu (x))\big) \cdot \big(t_2([\nu (y),\nu (z)])  + h^{t_2}(\nu(y),\nu(z)) \notag\\& \quad + \Gamma^{t_2}_{\nu(y)}(g(z)) -\Gamma^{t_2}_{\nu(z)}(g(y)) + [g(y),g(z)]\big) \notag\\
		&= \big(h^{t_2}(\nu(x),\nu(z)) + \Gamma^{t_2}_{\nu(x)}(g(z)) -\Gamma^{t_2}_{\nu(z)}(g(x))  + [g(x),g(z)]\big) \cdot g(y)  + \gamma^{t_2}_{\nu(y)}\big(h^{t_2}(\nu(x),\nu(z))  \notag\\& \quad + \Gamma^{t_2}_{\nu(x)}(g(z)) -\Gamma^{t_2}_{\nu(z)}(g(x)) + [g(x),g(z)]\big) 
		  + \gamma^{t_2}_{\nu([x,z])}(g(y)) + f^{t_2}(\nu([x,z]),\nu(y))  \notag\\& \quad  + g(x) \cdot 
	 	\big(h^{t_2}(\nu(y),\nu(z))  + \Gamma^{t_2}_{\nu(y)}(g(z)) -\Gamma^{t_2}_{\nu(z)}(g(y)) + [g(y),g(z)]\big)  \notag\\& \quad+ \gamma^{t_2}_{\nu([y,z])}(g(x)) 
		+ \gamma^{t_2}_{\nu(x)}\big(h^{t_2}(\nu(y),\nu(z))   + \Gamma^{t_2}_{\nu(y)}(g(z)) -\Gamma^{t_2}_{\nu(z)}(g(y)) + [g(y),g(z)]\big)  \notag\\& \quad+ 
		 f^{t_2}(\nu(x),\nu([y,z])) +t_2\big(\nu(x)\nu([y,z])\big)+ t_2\big(\nu([x,z])\nu(y)\big). \label{2.31}
	 \end{align}
On the other hand
	\begin{align}
		\mu\big([t_1(x)\cdot t_1(y),t_1(z)]\big) 
		&= \mu\big([f^{t_1}(x,y) + t_1(xy), t_1(z)]\big) \notag\\
		&\hspace{-1.7cm}= \mu([f^{t_1}(x,y),t_1(z)]) + \mu([t_1(xy), t_1(z)]) \notag\\
		&\hspace{-1.7cm}= -\mu(\Gamma^{t_1}_{z}(f^{t_1}(x,y))) + \mu\big(t_1([xy,z]) + h^{t_1}(xy,z)\big) \notag\\
		&\hspace{-1.7cm}= -\mu(\Gamma^{t_1}_{z}(f^{t_1}(x,y))) + \mu(t_1([xy,z])) + \mu(h^{t_1}(xy,z)) \notag\\
		&\hspace{-1.7cm}=  -\lambda(\Gamma^{t_1}_{z}(f^{t_1}(x,y))) + t_2(\nu([xy,z])) + g([xy,z]) + \lambda(h^{t_1}(xy,z)).\label{2.32} 
	\end{align}
	Since $\nu$ is a Poisson algebra homomorphism, by comparing equations  $(\ref{2.31})$ and $(\ref{2.32}),$ we get 
\begin{align}
		&\hspace{-0.6cm}g([xy,z]) -\lambda\big(\Gamma^{t_1}_{z}(f^{t_1}(x,y))\big) + \lambda(h^{t_1}(xy,z)) \notag\\&\hspace{-0.4cm}= \big(h^{t_2}(\nu(x),\nu(z)) + \Gamma^{t_2}_{\nu(x)}(g(z)) -\Gamma^{t_2}_{\nu(z)}(g(x))  + [g(x),g(z)]\big) \cdot g(y)  + \gamma^{t_2}_{\nu(y)}\big(h^{t_2}(\nu(x),\nu(z))  \notag\\& \quad + \Gamma^{t_2}_{\nu(x)}(g(z)) -\Gamma^{t_2}_{\nu(z)}(g(x)) + [g(x),g(z)]\big) 
		  + \gamma^{t_2}_{\nu([x,z])}(g(y)) + f^{t_2}(\nu([x,z]),\nu(y))  \notag\\& \quad  + g(x) \cdot 
	 	\big(h^{t_2}(\nu(y),\nu(z))  + \Gamma^{t_2}_{\nu(y)}(g(z)) -\Gamma^{t_2}_{\nu(z)}(g(y)) + [g(y),g(z)]\big)  \notag\\& \quad+ \gamma^{t_2}_{\nu([y,z])}(g(x)) 
		+ \gamma^{t_2}_{\nu(x)}\big(h^{t_2}(\nu(y),\nu(z))   + \Gamma^{t_2}_{\nu(y)}(g(z)) -\Gamma^{t_2}_{\nu(z)}(g(y)) + [g(y),g(z)]\big)  \notag\\& \quad+ 
		 f^{t_2}(\nu(x),\nu([y,z])). \label{2.36}
	\end{align}
	
	Thus a morphism  $(\lambda, \mu, \nu )$ between extensions $\mathcal{E}(H_1,K_1)$ and $\mathcal{E}(H_2,K_2),$ together with  choices of sections $t_1$ and $t_2$ for these extensions, naturally induces a vector space homomorphism $g$ from $K_1$ to $H_2$ such that the triple $(\lambda, g, \nu )$ satisfies $(\ref{2.25})$,  $(\ref{2.26})$,  $(\ref{2.29})$,  $(\ref{2.30})$, and  $(\ref{2.36})$. It can be seen as a morphism from the factor system $(H_1, K_1, f^{t_1}, h^{t_1}, \gamma^{t_1}, \Gamma^{t_1})$ to $(H_2, K_2, f^{t_2}, h^{t_2}, \gamma^{t_2}, \Gamma^{t_2})$.
	\begin{definition}
 Let $t_1$ and $ t_2$ be two sections of extension $\mathcal{E}(H,K)$, and let $(H, K, f^{t_1}, h^{t_1}$ $ ,\gamma^{t_1}, \Gamma^{t_1})$ and $(H, K, f^{t_2}, h^{t_2}, \gamma^{t_2}, \Gamma^{t_2})$ be the corresponding factor systems. A morphism  $(\lambda, g, \nu )$ from the factor system $(H, K, f^{t_1}, h^{t_1}, \gamma^{t_1}, \Gamma^{t_1})$ to $(H, K, f^{t_2}, h^{t_2}, \gamma^{t_2}, \Gamma^{t_2})$ is said to be an equivalence between factor systems if $\lambda=I_H$ and $\nu =I_K$.
\end{definition}
Let $(\lambda_1, \mu_1, \nu_1 )$ be a morphism from an extension  $$ \mathcal{E}(H_1, K_1) \equiv	{0}\longrightarrow  H_1\overset{i_1} \longrightarrow P_1\overset{p_1} \longrightarrow  K_1 \longrightarrow {0}$$ to an extension  $$ \mathcal{E}(H_2, K_2) \equiv	{0}\longrightarrow  H_2\overset{i_2} \longrightarrow P_2\overset{p_2} \longrightarrow  K_2 \longrightarrow {0,}$$ and $(\lambda_2, \mu_2, \nu_2 )$ be another morphism from the extension $\mathcal{E}(H_2, K_2)$ to the extension  $$ \mathcal{E}(H_3, K_3) \equiv	{0}\longrightarrow  H_3\overset{i_3} \longrightarrow P_3\overset{p_3} \longrightarrow  K_3 \longrightarrow {0.}$$ Let $t_1, t_2 $ and $t_3$ be corresponding choices of the sections. Then as in $(\ref{2.22}),$ $$\mu_1(t_1(x))=t_2(\nu_1 (x)) + g_1(x)  $$ and   $$\mu_2(t_2(x))=t_3(\nu_2 (x)) + g_2(x),  $$ where $g_1$ is the uniquely determined vector space homomorphism from $K_1$ to $H_2,$ and $g_2$ is that from $K_2$ to $H_3.$ In turn, we have 
\begin{align*}
\mu_2(\mu_1(t_1(x))) &= \mu_2\big(t_2(\nu_1(x)) + g_1(x)\big) =  \mu_2(t_2(\nu_1(x))) + \mu_2(g_1(x)) \\& =  t_3(\nu_2(\nu_1(x))) + g_2(\nu_1(x)) + \lambda_2(g_1(x)) = t_3\big((\nu_2 \circ \nu_1)(x)\big) + g_3(x),
\end{align*}
	Where $g_3(x) = g_2(\nu_1(x)) + \lambda_2(g_1(x)), $ for each $x\in K_1.$ It follows that the composition $(\lambda_2 \circ \lambda_1, \ \mu_2 \circ \mu_1, \ \nu_2 \circ \nu_1)$ induces the triple  $( \lambda_2 \circ \lambda_1 , \ g_3, \ \nu_2 \circ \nu_1).$
	
	Now we introduce the category $FAC$ whose objects are factor systems, and a morphism from $(H_1, K_1,  f^{1}, h^{1}, \gamma^{1}, \Gamma^{1})$ to $(  H_2, K_2, f^{2}, h^{2}, \gamma^{2}, \Gamma^{2})$ is a triple $(\lambda, g, \nu),$ where  $\lambda : H_1 \to H_2$, $\nu : K_1 \to K_2$ are Poisson algebra homomorphism, and $g$ is a vector space homomorphism from $K_1$ to $H_2$ satisfying  $(\ref{2.25})$,  $(\ref{2.26})$,  $(\ref{2.29})$,  $(\ref{2.30})$, and $(\ref{2.36})$ with replaced by $( \gamma^{1}, f^{1}, h^{1}, \Gamma^{1})$ to $( f^{t_1}, h^{t_1}, \gamma^{t_1}, \Gamma^{t_1})$ and $( f^{2}, h^{2}, \gamma^{2}, \Gamma^{2})$  to  $( f^{t_2}, h^{t_2}, \gamma^{t_2}, \Gamma^{t_2})$. 
	
	The following theorem is a consequence of the above discussion.
	
	\begin{theorem}
		There is an equivalence between the category $EXT$ of extensions to the category $FAC$ of factor systems. 
	\end{theorem}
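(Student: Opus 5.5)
We will prove the theorem by building a functor $\Phi: EXT \to FAC$ and showing that it is full, faithful and essentially surjective. Since Proposition \ref{P2.3} and the computations leading to (\ref{2.25})--(\ref{2.36}) do most of the work, the proof mainly organizes these facts.

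\emph{Step 1: definition of $\Phi$.} For each extension $\mathcal{E}(H,K)$ we fix once and for all a section $t_{\mathcal{E}}$ (choice). On objects we set $\Phi(\mathcal{E}) = Fac(\mathcal{E}, t_{\mathcal{E}})$, which is a factor system by Proposition \ref{P2.3}. For a morphism $(\lambda,\mu,\nu)$ we set $\Phi(\lambda,\mu,\nu)=(\lambda,g,\nu)$, where $g$ is the unique linear map given by (\ref{2.22}). The discussion preceding Definition \ref{Fac}'s sequel already shows that $(\lambda,g,\nu)$ satisfies (\ref{2.25}), (\ref{2.26}), (\ref{2.29}), (\ref{2.30}) and (\ref{2.36}), so it is a morphism in $FAC$.

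\emph{Step 2: functoriality.} The identity morphism gives $g=0$, because $\mu(t(x))=t(x)$. For composition, the computation above gives $g_3 = g_2\circ\nu_1+\lambda_2\circ g_1$. We take this formula as the definition of composition in $FAC$, with identity $(I_H,0,I_K)$. We must check that this composite again satisfies the five defining identities. Rather than verifying this directly, we obtain it from faithfulness and fullness: the composite corresponds to the composite extension morphism. Thus $\Phi$ is a functor.

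\emph{Step 3: faithful and full.} For faithfulness, note that each element of $P_1$ can be written uniquely as $h+t_1(x)$. Then
\[
\mu(h+t_1(x))=\lambda(h)+g(x)+t_2(\nu(x)),
\]
so $\mu$ is determined by $(\lambda,g,\nu)$. For fullness, start from a morphism $(\lambda,g,\nu)$ of factor systems and define $\mu$ by this same formula. Then $\mu$ is linear, and it makes the diagram commute because $p_2(\mu(h+t_1(x)))=\nu(x)$ and $\mu|_{H_1}=\lambda$. The main obstacle is to show that $\mu$ preserves the product and the bracket. We will expand $\mu\big((h+t_1(x))(k+t_1(y))\big)$ using (\ref{2.5}) and compare it with $\mu(h+t_1(x))\,\mu(k+t_1(y))$ computed in $P_2$. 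The $t_2$-components agree because $\nu$ is a homomorphism. In the $H_2$-components, the terms sort by type. Terms of type $hk$ match because $\lambda$ is a homomorphism. Terms of type $\gamma(h)$ match by (\ref{2.30}). Terms involving $f$ and $g(x)g(y)$ match by (\ref{2.29}). The bracket is handled the same way with (\ref{2.8}): terms of type $\Gamma$ match by (\ref{2.26}), and terms involving $h^t$ and $[g(x),g(y)]$ match by (\ref{2.25}). Identity (\ref{2.36}) is then redundant, but it is automatically consistent. Once fullness and faithfulness hold, the composition law on $FAC$ is well defined and associative, because it is transported from $EXT$.

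\emph{Step 4: essential surjectivity.} Given a factor system $F$, Proposition \ref{P2.3} gives an extension $\mathcal{E}_F$ on $H\times K$ with section $t(x)=(0,x)$ such that $Fac(\mathcal{E}_F,t)=F$. If the chosen section $t_{\mathcal{E}_F}$ differs from $t$, the two differ by a linear map $g_0: K\to H$. By Step 3 applied to $\mu=\mathrm{id}$, the triple $(I_H,g_0,I_K)$ is an isomorphism in $FAC$ from $\Phi(\mathcal{E}_F)$ to $F$. Hence $\Phi$ is essentially surjective, and together with Step 3 this shows that $\Phi$ is an equivalence of categories.
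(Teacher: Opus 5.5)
Your proposal is correct and is essentially the paper's argument. The paper only says the theorem follows from the preceding discussion: factor systems from sections, Proposition~\ref{P2.3}, the map $g$ of (\ref{2.22}), and the composition rule $g_3=g_2\circ\nu_1+\lambda_2\circ g_1$. Your functor $\Phi$, built from chosen sections and shown to be full, faithful and essentially surjective, is that discussion made precise.

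The ingredient you add, which the paper never writes down, is the lift $\mu(h+t_1(x))=\lambda(h)+g(x)+t_2(\nu(x))$. Your verification of it via (\ref{2.25}), (\ref{2.26}), (\ref{2.29}) and (\ref{2.30}) is correct, so (\ref{2.36}) is indeed redundant. This lift works for arbitrary sections, and that general form is what you actually use, both for closure of composition in $FAC$ and for the isomorphism $(I_H,g_0,I_K)$ in Step~4.
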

\section{Ring center extension and cohomology}
In this section, we discuss the theory of ring center extension in a similar manner to Section $3.$ Throughout this section, we assume that $Der(P)$ is a commutative Poisson algebra with respect to the trivial ring structure. 
\begin{lemma}
 Let $P$ be a commutative Poisson algebra. The set $Z_{R}(P) = \{ x\in P \ | \ xy = 0, \ \text{for all }y \in P  \}$ is an ideal of $P,$ called a ring center of $P.$  
\end{lemma}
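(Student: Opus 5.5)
To prove that $Z_R(P)$ is an ideal, I would check directly, using the definition of ideal given in the preliminaries, that it is a subspace and that both $Z_R(P)P \subseteq Z_R(P)$ and $[Z_R(P),P]\subseteq Z_R(P)$. Closure of $Z_R(P)$ under its own product and bracket then follows from these two inclusions, since $Z_R(P)\subseteq P$.

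\emph{Subspace.} Take $x_1,x_2\in Z_R(P)$, $\alpha,\beta\in\F$ and $y\in P$. By bilinearity of the product,
\[
(\alpha x_1+\beta x_2)y=\alpha x_1y+\beta x_2y=0,
\]
and clearly $0\in Z_R(P)$.

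\emph{Product condition.} Take $x\in Z_R(P)$ and $y\in P$. Then $xy=0$, and $0\in Z_R(P)$. Hence $Z_R(P)P=0\subseteq Z_R(P)$.

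\emph{Bracket condition.} This is the only step with real content. Take $x\in Z_R(P)$ and $y,z\in P$; we need $[x,y]z=0$. By the Leibniz rule (3) of Definition \ref{D1.3},
\[
[xz,y]=[x,y]z+x[z,y].
\]
The left side is $[0,y]=0$, because $xz=0$. On the right, $x[z,y]=0$, because $x\in Z_R(P)$ and $[z,y]\in P$. Therefore $[x,y]z=0$ for every $z\in P$, so $[x,y]\in Z_R(P)$. This gives $[Z_R(P),P]\subseteq Z_R(P)$, and $[P,Z_R(P)]\subseteq Z_R(P)$ follows by antisymmetry of the bracket.

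The main point is to apply the Leibniz rule to the product $xz$ rather than trying to expand $[x,y]z$ directly. Once this is done, all three conditions hold and $Z_R(P)$ is an ideal.
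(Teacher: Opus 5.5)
Your proof is correct and follows essentially the same route as the paper: the subspace and product conditions are immediate, and the bracket condition comes from the Leibniz rule in the form $[x,y]z = [xz,y] - x[z,y] = 0$. Your remarks that subalgebra closure follows from the two ideal inclusions, and that $[P,Z_R(P)]\subseteq Z_R(P)$ holds by antisymmetry, only make explicit details the paper leaves unstated.
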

\begin{proof}
 It is clear that $Z_{R}(P)$ is a subspace of $P.$ By definition, $ xy \in Z_R(P)$ for all $y\in P$ and $ x\in Z_R(P).$ Now, let $x\in Z_R(P)$ and $y, z \in P.$ Then, by Definition \ref{D1.3}, we have $$ [x, y] z = [x z, y] - x [z,y]  = 0.$$ Hence, $[x, y] \in Z_R(P).$  This completes the proof. 
\end{proof}

\begin{definition}
   An extension $\mathcal{E}(H, K) \equiv	{0}\longrightarrow  H\overset{i} \longrightarrow P\overset{\beta} \longrightarrow  K \longrightarrow {0}$ of a commutative Poisson algebra $H$ with trivial ring structure by a commutative Poisson algebra $K$, is called a ring center extension if $H$ is contained in the ring center $Z_R(P)$ of $P$.   
\end{definition}

Let $ \mathcal{E}(H, K) \equiv	{0}\longrightarrow  H\overset{i} \longrightarrow P\overset{\beta} \longrightarrow  K \longrightarrow {0}$ be a ring center extension of $H$ by $K$ and $t: K \to P$ be a section of $\mathcal{E}(H, K)$. Since $H\subseteq Z_R(P)$, that is, $hx = 0 $ for all $h\in H$ and $x\in P.$
Now, by section $3,$ the ring structure $\cdot$ and Poisson bracket $[ \  , \ ]$ in $P$ is defined by    

\begin{align} \label{3.1}
	  \big(h + t(x)\big) \cdot \big(k+t(y)\big) = f^t(x, y) + t(xy),
    \end{align} 
    and 
\begin{align} \label{3.2}
		[h+t(x), k+t(y)] =  [h, k] - \Gamma^t_{y}(h) + \Gamma^t_{x}(k) + h^t(x, y) + t([x, y]), 
\end{align}
where $f^t, h^t: K\times K \to H$ are symmetric bilinear and alternating maps, respectively, and $\Gamma^t_{x}:  H \to H$ is a Poisson algebra derivation  defined as in Section $3,$ and satisfying the following conditions:
\begin{equation} \label{3.3} 
            \begin{rcases}
               \hspace{-0.376cm}(1) \ \Gamma^t_{xy}(h) = [h, f^t(x, y)]\\ \hspace{-0.376cm} (2)  \ \Gamma ^t_{[x, y]} (h) = \Gamma^t_x \Gamma^t_y(h) - \Gamma^t_y \Gamma^t_x(h) - [h^t(x, y), h] \\  \hspace{-0.376cm} (3) \ f^{t}(x y, z) = f^{t}(x,y z)  \\ \hspace{-0.376cm}(4)  \  h^t([x,y], z) + h^t([y, z], x) + h^t([z,x], y)=\Gamma^t_z(h^t(x, y)) + \Gamma^t_x(h^t(y, z)) + \Gamma^t_y(h^t(z, x)) \\ 	\hspace{-0.376cm} (5) \	h^t(xy,z)  =  \Gamma_z^t(f^{t}(x,y))  + f^t([x,z],y) + f^t(x,[y,z]). 
            \end{rcases}
        \end{equation}

	\begin{definition} \label{D3.2}
		Let $H$ be a commutative Poisson algebra with trivial ring structure and $K$ be a commutative Poisson algebra. Then a ring center factor system is a quintuple $(H, K, f, h, \Gamma),$ where $f: K \times K \to H $ is a symmetric bilinear map and  $ h: K \times K \to H $ is an alternating map, and $\Gamma: K \to Der(H)$  is a vector space homomorphism satisfying the equation $(\ref{3.3})$. 
	\end{definition}
	So, we can say that for every ring center extension $\mathcal{E}(H, K)$ with a choice of a section $t,$ we have a ring center factor system $RFac(\mathcal{E}, t) = (H, K,  f^t, h^t, \Gamma^t), $ called as ring center factor system given by the ring center extension $\mathcal{E}(H, K).$ Now, we have the following proposition:  

\begin{proposition}\label{P3.2}
		Every ring center extension $\mathcal{E}(H, K)$ with a choice of a section $t$ determines a ring center factor system $RFac(\mathcal{E}, t) = (H, K,  f^t, h^t, \Gamma^t), $ and conversely, for a given ring center factor system $(H, K,  f, h, \Gamma),$ there exists a ring center extension $\mathcal{E}(H, K)$ of $H$ by $K$ with a section $t$ such that  $(H, K, f, h, \Gamma) = RFac(\mathcal{E}, t).$      
	\end{proposition}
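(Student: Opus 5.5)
The plan is to run the argument of Proposition~\ref{P2.3} again, now in the special setting $H\subseteq Z_R(P)$ with $HH=0$.

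\emph{Forward direction.} Given a ring center extension $\mathcal{E}(H,K)$ and a section $t$, I take $f^t$, $h^t$, $\Gamma^t$ from Section 3. Every $\gamma^t_x$ vanishes, since $\gamma^t_x(h)=ht(x)=0$ because $H\subseteq Z_R(P)$. Also $hk=0$ for $h,k\in H$. With these substitutions:
\begin{itemize}
\item the first identity of (\ref{2.10}) becomes (3.3)(1);
\item the second identity of (\ref{2.10}) is (3.3)(2) unchanged;
\item (\ref{2.13}) becomes (3.3)(3);
\item (\ref{2.18}) is (3.3)(4);
\item (\ref{2.21}) becomes (3.3)(5).
\end{itemize}
The identities in (\ref{2.9}) reduce to $0=0$. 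Symmetry and bilinearity of $f^t$, alternation and bilinearity of $h^t$, and the fact that $\Gamma^t$ is a linear map into $Der(H)$ were all proved in Section 3 and Lemma \ref{a}. So $(H,K,f^t,h^t,\Gamma^t)$ is a ring center factor system in the sense of Definition \ref{D3.2}.

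\emph{Converse.} Given $(H,K,f,h,\Gamma)$, I set $\gamma_x=0$ for all $x\in K$. I then check that $(H,K,f,h,0,\Gamma)$ is a factor system in the sense of Definition \ref{Fac}:
\begin{itemize}
\item with $\gamma=0$ and $HH=0$, every identity in (\ref{2.9}) reads $0=0$;
\item (\ref{2.10}), (\ref{2.13}), (\ref{2.18}) and (\ref{2.21}) reduce exactly to (3.3)(1)--(5).
\end{itemize}
Proposition \ref{P2.3} then gives the extension $P=H\times K$ with
\[(h,x)(k,y)=(f(x,y),xy)\]
and bracket
\[[(h,x),(k,y)]=\big([h,k]-\Gamma_y(h)+\Gamma_x(k)+h(x,y),[x,y]\big).\]
It also gives $i(h)=(h,0)$, $p(h,x)=x$, $t(x)=(0,x)$, and $Fac(\mathcal{E},t)=(H,K,f,h,0,\Gamma)$.

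It remains to check that this extension is ring center. We have $(h,0)(k,y)=(f(0,y),0)=0$ by bilinearity of $f$. Hence $i(H)\subseteq Z_R(P)$, and $H$ has trivial ring structure by hypothesis. The factor system attached to $t$ therefore has $f^t=f$, $h^t=h$, $\Gamma^t=\Gamma$, that is, $RFac(\mathcal{E},t)=(H,K,f,h,\Gamma)$.

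The main obstacle is that Proposition \ref{P2.3} asserts the Poisson algebra axioms for $H\times K$ with only "easily verified". If that is not accepted, I would verify directly three things:
\begin{itemize}
\item \emph{Associativity of the product.} This uses (3.3)(3) together with the vanishing of $H$-components in products.
\item \emph{Jacobi identity.} This uses that each $\Gamma_x$ is a derivation of the bracket, together with (3.3)(2) and (3.3)(4).
\item \emph{Leibniz rule.} Expanding $[(h,x)(k,y),(l,z)]$ gives $\big(-\Gamma_z(f(x,y))+h(xy,z),[xy,z]\big)$. Expanding $[(h,x),(l,z)](k,y)+(h,x)[(k,y),(l,z)]$ gives $\big(f([x,z],y)+f(x,[y,z]),[x,z]y+x[y,z]\big)$. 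The first components agree precisely by (3.3)(5), and the second components agree by the Leibniz rule in $K$.
\end{itemize}
I would not use (3.3)(1) in these checks. It is only needed for the forward direction and for matching the general factor system.
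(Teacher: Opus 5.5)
Your main argument is correct and follows the paper's route. The paper gives no separate proof of Proposition \ref{P3.2}. Its forward direction is the specialization of Section 3 to $\gamma^t=0$ and $HH=0$, which yields (\ref{3.1})--(\ref{3.3}). The converse is the $H\times K$ construction of Proposition \ref{P2.3} with $\gamma=0$, exactly as you set it up.

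Your fallback verification of the Leibniz rule is wrong, however. So is your closing claim that condition (1) of (\ref{3.3}), $\Gamma_{xy}(l)=[l,f(x,y)]$, is not needed there. Since $(h,x)(k,y)=(f(x,y),xy)$, the $H$-component of $[(h,x)(k,y),(l,z)]=[(f(x,y),xy),(l,z)]$ is
\[
[f(x,y),l]-\Gamma_z(f(x,y))+\Gamma_{xy}(l)+h(xy,z),
\]
not $-\Gamma_z(f(x,y))+h(xy,z)$. You dropped the $H$-bracket term and the term $\Gamma_{xy}(l)$ contributed by the first argument's $K$-component. Matching this with $f([x,z],y)+f(x,[y,z])$ for all $l\in H$ needs two things:
\begin{itemize}
\item condition (5), for the part not involving $l$;
\item $[f(x,y),l]+\Gamma_{xy}(l)=0$, which is exactly condition (1).
\end{itemize}

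Condition (1) is genuinely necessary. Take $f=h=0$ and any Lie homomorphism $\Gamma\colon K\to Der(H)$ with some $\Gamma_{xy}\neq 0$. For instance, let $H$ be one-dimensional abelian, $K=\mathbb{F}e$ with $e\cdot e=e$ and zero bracket, and $\Gamma_e=I_H$. Then $[(0,x)(0,y),(l,0)]=(\Gamma_{xy}(l),0)$, while $[(0,x),(l,0)](0,y)+(0,x)[(0,y),(l,0)]=0$.

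The fix is simply to keep those terms and invoke condition (1). This does not affect your main argument, which uses condition (1) through the first identity of (\ref{2.10}) when you check that $(H,K,f,h,0,\Gamma)$ is a factor system. Proposition \ref{P2.3} does hold: in its general Leibniz check, the terms involving only $l$ cancel precisely by that identity. So citing it is legitimate.
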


\subsection{Equivalence between category REXT of ring center extensions and category RFAC of ring center factor systems} \ \ \ 

Let $(\lambda, \mu, \nu )$ be a morphism from the ring center extension $\mathcal{E}(H_1,K_1)$ to the ring center extension $\mathcal{E}(H_2,K_2)$. Let $t_1$ and $t_2$ be sections of $\mathcal{E}(H_1, K_1)$ and $\mathcal{E}(H_2, K_2),$ respectively. Consider the corresponding factor systems $(H_1, K_1, f^{t_1}, h^{t_1}, \Gamma^{t_1})$ and $(H_2, K_2, f^{t_2},$ $ h^{t_2}, \Gamma^{t_2})$ of $\mathcal{E}(H_1, K_1)$ and $\mathcal{E}(H_2, K_2),$ respectively.

\begin{definition}
    A morphism between two ring center factor systems $(H_1, K_1, f^{t_1}, h^{t_1}, \Gamma^{t_1})$ and $(H_2, K_2, f^{t_2}, h^{t_2}, \Gamma^{t_2})$ is a triple $(\lambda, g, \nu)$, where  $\lambda: H_1 \to H_2$, $\nu: K_1 \to K_2$ are Poisson algebra homomorphisms, and $g: K_1 \to H_2$ is a vector space homomorphism satisfying the following conditions
    \begin{equation} \label{x}
        \begin{rcases}
        \hspace{-0.44cm}  (1) \ h^{t_2}(\nu(x),\nu(y)) + \Gamma^{t_2}_{\nu(x)}(g(y)) -\Gamma^{t_2}_{\nu(y)}(g(x)) + [g(x),g(y)]  =g([x,y])   + \lambda(h^{t_1}(x,y)) \\
        \hspace{-0.44cm}(2) \ \lambda(\Gamma^{t_1}_{x}(h))=\Gamma^{t_2}_{\nu(x)}(\lambda(h)) + [g(x),\lambda(h)] \\
       \hspace{-0.44cm} (3) \ \lambda(f^{t_1}(x,y))+ g(xy) = f^{t_2}(\nu(x),\nu(y)) \\
        \hspace{-0.44cm}(4) \ g([xy,z]) -\lambda(\Gamma^{t_1}_{z}(f^{t_1}(x,y)) - h^{t_1}(xy,z))  = f^{t_2}(\nu([x,z]),\nu(y)) + f^{t_2}(\nu(x),\nu([y,z]))
        \end{rcases}
    \end{equation}
\end{definition}

\begin{definition}
Let $t_1$ and $t_2$ be two sections of the ring center extension $\mathcal{E}(H,K)$, and let $(H, K, f^{t_1}, h^{t_1}, \Gamma^{t_1})$ and $(H, K, f^{t_2}, h^{t_2}, \Gamma^{t_2})$ be the corresponding ring center factor systems. A morphism  $(\lambda, g, \nu )$ from the ring center factor system $(H, K, f^{t_1}, h^{t_1}, \Gamma^{t_1})$ to $(H, K, f^{t_2}, h^{t_2}, \Gamma^{t_2})$ is said to be an equivalence between the ring center factor systems if $\lambda=I_H$ and $\nu =I_K$.
\end{definition}

	Let $(\lambda_1, \mu_1, \nu_1 )$ be a morphism from a ring center extension $\mathcal{E}(H_1, K_1)$   to a ring center extension $\mathcal{E}(H_2, K_2),$ and $(\lambda_2, \mu_2, \nu_2 )$ be another morphism from a ring center extension $\mathcal{E}(H_2, K_2)$ to a ring center extension $ \mathcal{E}(H_3, K_3).$ Let $t_1, t_2 $ and $t_3$ be corresponding choices of sections. Then, we have vector space homomorphisms $ g_1: K_1 \to H_2,$ and $g_2: K_2 \to H_3$ (defined as in equation \ref{2.22}). In turn, we have 
	\begin{align*}
		\mu_2(\mu_1(t_1(x))) &= \mu_2\big(t_2(\nu_1(x)) + g_1(x)\big) =  \mu_2(t_2(\nu_1(x))) + \mu_2(g_1(x)) \\& =  t_3(\nu_2(\nu_1(x))) + g_2(\nu_1(x)) + \lambda_2(g_1(x)) = t_3\big((\nu_2 \circ \nu_1)(x)\big) + g_3(x),
	\end{align*}
	where $g_3(x) = g_2(\nu_1(x)) + \lambda_2(g_1(x)), $ for each $x\in K_1.$ It follows that the composition $(\lambda_2 \circ \lambda_1, \ \mu_2 \circ \mu_1, \ \nu_2 \circ \nu_1)$ induces the triple $( \lambda_2 \circ \lambda_1 , \ g_3, \ \nu_2 \circ \nu_1 ).$
	
	Now, we introduce the category $RFAC$ whose objects are ring center factor systems, and the morphism from $(  H_1, K_1, f^{1}, h^{1}, \Gamma^{1})$ to $(  H_2, K_2, f^{2}, h^{2}, \Gamma^{2})$ is a triple $(\lambda, g, \nu),$ where $\nu: K_1 \to K_2,$ $\lambda: H_1 \to H_2$ are Poisson algebra homomorphisms, and $g$ is a vector space homomorphism from $K_1$ to $H_2$ satisfying equation $(\ref{x})$ with replaced by $(f^{1}, h^{1}, \Gamma^{1})$ to $( f^{t_1}, h^{t_1}, \Gamma^{t_1})$ and $( f^{2}, h^{2}, \Gamma^{2})$  to  $( f^{t_2}, h^{t_2}, \Gamma^{t_2})$. 
	
	The following theorem is a consequence of the above discussion.
	
	\begin{theorem}\label{c}
		There is an equivalence between the category $REXT$ of ring center extensions and the category $RFAC$ of ring center factor systems. 
	\end{theorem}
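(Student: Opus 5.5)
To prove Theorem \ref{c}, I will build a functor $\Phi: REXT \to RFAC$ and show it is faithful, full and essentially surjective. This mirrors the argument for the corresponding theorem on $EXT$ and $FAC$, now carried out in the ring center setting.

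\emph{Defining $\Phi$.} For each ring center extension $\mathcal{E}(H,K)$ I fix once and for all a section $t_{\mathcal{E}}$ and put $\Phi(\mathcal{E}) = RFac(\mathcal{E}, t_{\mathcal{E}})$. By Proposition \ref{P3.2} this is a ring center factor system. For a morphism $(\lambda,\mu,\nu)$ I put $\Phi(\lambda,\mu,\nu) = (\lambda, g, \nu)$, where $g$ is the unique linear map with $\mu(t_1(x)) = t_2(\nu(x)) + g(x)$, as in (\ref{2.22}).

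\emph{$\Phi$ lands in $RFAC$.} The conditions (\ref{x}) are obtained by specialising (\ref{2.25}), (\ref{2.26}), (\ref{2.29}) and (\ref{2.36}) using $H_i \subseteq Z_R(P_i)$. This hypothesis kills every product with an $H$-factor, so $g(x)g(y)=0$ and $\gamma^{t_i}=0$. Concretely:
\begin{itemize}
\item[] (\ref{2.29}) reduces to (3);
\item[] (\ref{2.30}) becomes trivial;
\item[] (\ref{2.36}) reduces to (4), after using $h^{t_1}(xy,z)$ together with the linearity of $\lambda$.
\end{itemize}

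\emph{Functoriality.} The computation preceding the theorem gives $g_3 = g_2\circ\nu_1 + \lambda_2\circ g_1$. This is exactly the composition law in $RFAC$, so $\Phi$ preserves composition. Taking $\mu=I$ gives $g=0$, so identities are preserved.

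\emph{Faithful and full.} The map $\mu$ is recovered from $(\lambda,g,\nu)$ by
\[
\mu(h+t_1(x)) = \lambda(h) + t_2(\nu(x)) + g(x),
\]
so $\Phi$ is faithful. For fullness, take a triple $(\lambda,g,\nu)$ satisfying (\ref{x}) and define $\mu$ by the same formula. This $\mu$ is linear, and by construction it makes both squares of the diagram commute. It remains to check that $\mu$ is a Poisson homomorphism, using the multiplication formulas (\ref{3.1}) and (\ref{3.2}).
\begin{itemize}
\item[] \emph{Product.} We need $\mu(f^{t_1}(x,y)+t_1(xy)) = f^{t_2}(\nu x,\nu y) + t_2(\nu(xy))$. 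This is condition (3) together with multiplicativity of $\nu$.
\item[] \emph{Bracket.} Expanding $[\lambda h + t_2\nu x + gx,\ \lambda k + t_2\nu y + gy]$ and comparing with $\mu([h+t_1x,\,k+t_1y])$ uses (1) for the pure section terms. It uses (2) for the mixed terms $\Gamma^{t_1}_x(k)$ and $\Gamma^{t_1}_y(h)$, and the fact that $\lambda$ preserves brackets for $[h,k]$.
\end{itemize}
Condition (4) is then not needed separately. I expect it to follow from the others, or at worst to be consistent with them, since it was derived as a consequence.

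\emph{Essentially surjective.} This is the step I expect to be the main obstacle. Given a ring center factor system, Proposition \ref{P3.2} produces an extension $\mathcal{E}$ on $H\times K$ with section $t(x)=(0,x)$ such that $RFac(\mathcal{E},t)$ is the given system. However, $\Phi(\mathcal{E})$ is computed with the chosen section $t_{\mathcal{E}}$, not with $t$. So I need an isomorphism in $RFAC$ between $RFac(\mathcal{E},t)$ and $RFac(\mathcal{E},t_{\mathcal{E}})$. I will take the triple $(I_H, g, I_K)$ with $g = t - t_{\mathcal{E}}$, which is the image under $\Phi$-type construction of the identity morphism. It satisfies (\ref{x}) by the specialisation step above, and its inverse is $(I_H,-g,I_K)$.

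The real difficulty is Proposition \ref{P3.2} itself: checking that $H\times K$ with these operations is a commutative Poisson algebra in which $H$ lies in the ring center. Associativity follows from (3), since $f(xy,z)=f(x,yz)$. The Jacobi identity follows from (2) and (4). The Leibniz rule follows from (1) and (5). Here (1) makes the $H$-components $\Gamma_{xy}(h)$ agree with $[h, f(x,y)]$, and $[H,P]\cdot P=0$ holds automatically. I would verify the Leibniz rule first, since that is where (1) and (5) interact and where an error in the signs of the conditions would show up.
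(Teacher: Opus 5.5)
Your proposal is correct and follows the route the paper intends. The paper simply declares Theorem~\ref{c} a consequence of the preceding discussion: Proposition~\ref{P3.2} for objects, the map $g$ of (\ref{2.22}) for morphisms, and the composition rule $g_3=g_2\circ\nu_1+\lambda_2\circ g_1$. Your fully-faithful plus essentially-surjective argument, including the section-change isomorphism $(I_H,\,t-t_{\mathcal{E}},\,I_K)$, makes that sketch explicit.

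Your hedge about condition (4) of (\ref{x}) can be dropped: it does follow from conditions (1)--(3) of (\ref{x}). Substitute condition (1) at $(xy,z)$, then condition (3), then condition (2) applied to $f^{t_1}(x,y)$, together with axioms (1) and (5) of (\ref{3.3}) for the target system. All the $\Gamma^{t_2}_{\nu(z)}$-terms and bracket terms cancel, leaving exactly condition (4).
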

   
Let $$ \mathcal{E}(H, K) \equiv	{0}\longrightarrow  H\overset{i} \longrightarrow P\overset{\beta} \longrightarrow  K \longrightarrow {0}$$ be a ring center extension of $H$ by $K$. Let $t$ and $s$ be two sections of $\mathcal{E}(H, K)$. Then there exists a vector space homomorphism $g: K \to H$ such that $$ s(x)= t(x) + g(x).$$
For $x\in K$ and $h\in H,$ we have 
$$\Gamma^s_x(h)= [s(x),h]= [t(x) + g(x), h]= [t(x), h] + [g(x), h].$$ Hence, $$\Gamma^s_x(h)= \Gamma^t_x(h) + ad(g(x))(h).$$ Thus, the vector space homomorphism $\Gamma^t: K \to Der(H)$ depend on the choice of section $t$.\\ 
Now, for each $h\in H$, define the vector space homomorphism $ad_h: H \to H$  by $ad_h(k)=[h,k]$. It is easy to see that $ad_h$ is a Poisson algebra derivation (called as inner derivation) and the set $$IDer(H)= \{ad_h \in Der(H) \ | \ h\in H \}$$ is an ideal of $Der(H).$ Now, the induced map from $K \to OutDer(H)=Der(H)/IDer(H)$ defined by $x\mapsto \Gamma_x^t + IDer(H)$ is independent of the choice of section $t$,
 %$$x\mapsto \Gamma_x^t + IDer(H),\ \ \ K \to OutDer(H)=Der(H)/IDer(H),$$
 where $OutDer(H)$ is the commutative Poisson algebra of outer derivations.  
Consequently, for each ring center extension $\mathcal{E}(H, K)$, we have a Poisson algebra homomorphism
$\Gamma_{\mathcal{E}}: K \to OutDer(H)$ defined as $$ \ \Gamma_{\mathcal{E}}(x)= \Gamma_x^t + IDer(H).$$

\begin{lemma}
Let $\mathcal{E}(H, K)$ be a ring center extension and $t$ be a section of $\mathcal{E}(H, K)$. Then the map $\Gamma_\mathcal{E}: K \to OutDer(H)$ is independent of the choice of sections, as well as the choice of the representative $\mathcal{E}(H, K)$ of the equivalence class.
\end{lemma}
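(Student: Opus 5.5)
The proof splits into two independence claims. Both reduce to the observation that the relevant derivations differ only by an inner derivation $ad_{h}$ for some $h\in H$.

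\emph{Independence of the section.} Let $t$ and $s$ be two sections of $\mathcal{E}(H,K)$. Since $\beta\circ s=\beta\circ t=I_K$, there is a vector space homomorphism $g:K\to H$ with $s(x)=t(x)+g(x)$. By the computation preceding the lemma,
$$\Gamma^s_x=\Gamma^t_x+ad_{g(x)},$$
and $ad_{g(x)}\in IDer(H)$. Hence $\Gamma^s_x+IDer(H)=\Gamma^t_x+IDer(H)$ for every $x\in K$, so $\Gamma_{\mathcal{E}}$ does not depend on the section chosen.

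\emph{Independence of the representative.} Let $\mathcal{E}'(H,K)\equiv 0\to H\overset{i'}\to P'\overset{\beta'}\to K\to 0$ be equivalent to $\mathcal{E}$ via an isomorphism $\phi:P\to P'$ with $\phi\circ i=i'$ and $\beta'\circ\phi=\beta$. Take a section $t$ of $\mathcal{E}$ and set $t'=\phi\circ t$. This is a section of $\mathcal{E}'$, because $\beta'\circ\phi\circ t=\beta\circ t=I_K$. Since $\phi$ restricts to the identity on $H$ and preserves brackets, for $h\in H$ we get
$$\Gamma^{t'}_x(h)=[\phi(t(x)),h]=[\phi(t(x)),\phi(h)]=\phi([t(x),h])=[t(x),h]=\Gamma^t_x(h).$$
Thus $\Gamma^{t'}=\Gamma^t$, and hence $\Gamma_{\mathcal{E}'}=\Gamma_{\mathcal{E}}$. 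Combined with the first part, the same conclusion holds for an arbitrary section of $\mathcal{E}'$. Equivalently, this is the identity $(\ref{x})(2)$ with $\lambda=I_H$ and $\nu=I_K$: it gives $\Gamma^{t_1}_x=\Gamma^{t_2}_x+ad_{g(x)}$ for the induced $g$.

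\emph{Well-definedness of the target.} Some care is needed to make sure the codomain and the map make sense. First, each $\Gamma^t_x$ is a Poisson derivation by part $(2)$ of the first lemma of Section~3. Second, $IDer(H)$ is an ideal of $Der(H)$ because $[D,ad_h]=ad_{D(h)}$, and the ring structure on $Der(H)$ is trivial. I do not expect any step to be a real obstacle. If the statement is also meant to assert that $\Gamma_{\mathcal{E}}$ is a homomorphism, that follows from $(\ref{3.3})(2)$, which shows $\Gamma^t_{[x,y]}-[\Gamma^t_x,\Gamma^t_y]=-ad_{h^t(x,y)}$, and from $(\ref{3.3})(1)$, which shows $\Gamma^t_{xy}=-ad_{f^t(x,y)}\in IDer(H)$. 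Both are zero modulo $IDer(H)$.
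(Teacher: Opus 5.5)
Your proof is correct and takes essentially the same route as the paper. You get independence of the section from $\Gamma^s_x=\Gamma^t_x+ad_{g(x)}$, and independence of the representative by transporting a section through $\phi$ and using $\phi|_H=I_H$. The only cosmetic difference is that the paper writes an arbitrary section of $\mathcal{E}'$ as $t'=g+\phi\circ t$ and discards $ad_{g(x)}$ directly, whereas you take $t'=\phi\circ t$, obtain the exact equality $\Gamma^{t'}=\Gamma^t$, and then invoke your first part.
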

\begin{proof}
    From the above discussions, it is clear that $\Gamma_\mathcal{E}$ is independent of the choice of sections.
Now, let $\mathcal{E}'(H, K)$ be an equivalent ring center extension to $\mathcal{E}(H, K)$. Let $t'$ be a section of $\mathcal{E}'(H,K)$, and $\Gamma_{\mathcal{E}'}: K \to OutDer(H)$ be the corresponding Poisson algebra homomorphism and $(I_H,\phi, I_K)$ be an equivalence from $\mathcal{E}(H,K)$ to $\mathcal{E'}(H,K)$, that is,  we have the following commutative diagram:
$$ 	\xymatrix{\mathcal{E}(H, K) \equiv 0 \ar[r] & H \ar[d]_-{I_H} \ar[r]^-{i} & P \ar[d]_-{\phi} \ar[r]^-{\beta} & K \ar[d]^-{I_K} \ar[r] & 0 \\
		\mathcal{E'}(H, K) \equiv  0 \ar[r] & H \ar[r]_-{i'} & P' \ar[r]_-{\beta'} & K \ar[r] & 0. }$$ 
Since $t$ is a section of $\mathcal{E}(H,K)$, $\phi \circ t$ is a section of $\mathcal{E'}(H,K)$. Therefore, there exists a vector space homomorphism $g$ from $K$ to $H$ such that $t'(x)= g(x) + \phi(t(x))$. Now, we have 

\begin{align*}
    \Gamma^{t'}_x(h)+ IDer(H)&=[t'(x),h]+ IDer(H)= [g(x) + \phi(t(x)), h]+IDer(H)\\&= [\phi(t(x)), h]+IDer(H) \\& = [\phi(t(x)), \phi(h)]+IDer(H)\\&= \phi([t(x),h])+IDer(H)\\&=\phi(\Gamma^t_x(h))+IDer(H)\\&= \Gamma^t_x(h) +IDer(H)
\end{align*}
(since $\phi(h)=h$ for all $h \in H$).
Thus, $\Gamma_{\mathcal{E'}}(x)=\Gamma_{\mathcal{E}}(x)$ for all $x \in K$. Therefore, $\Gamma_{\mathcal{E}}$ is independent of the choice of sections and the equivalent extensions.
\end{proof}

\begin{definition}
   A Poisson algebra homomorphism from $K$ to $OutDer(H)$ is called a \textbf{coupling} or an \textbf{abstract kernel} of $K$ to $H$.
\end{definition}
Based on the above discussions, we have seen that for any ring center extension $\mathcal{E}(H, K)$ we have an abstract kernel $\Gamma_{\mathcal{E}}$ of $K$ to $H$. Now, we have the following theorem:
\begin{theorem}
    Let $RExt(H, K)$ denote the set of all equivalence classes of ring center extensions in $\mathcal{E}(H, K)$ and $Hom(K, OutDer(H))$ denote the set of all abstract kernels (couplings) of $K$ to $H$. Then there is a natural map $\Phi$ from $RExt(H, K)$ to $Hom(K, OutDer(H))$ defined by $\Phi([\mathcal{E}])=\Gamma_{\mathcal{E}}$.
\end{theorem}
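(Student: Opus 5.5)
To prove the theorem, I need to show that $\Phi$ is well defined. This means checking two things: that $\Gamma_{\mathcal{E}}$ really lies in $Hom(K, OutDer(H))$, and that it depends only on the equivalence class $[\mathcal{E}]$. The second point is exactly the preceding lemma. $\Gamma_{\mathcal{E}}$ does not depend on the section $t$, because $\Gamma^{s}_x = \Gamma^t_x + ad_{g(x)}$ when $s = t+g$. It also does not change when $\mathcal{E}$ is replaced by an equivalent extension $\mathcal{E}'$. So $\Phi([\mathcal{E}]) := \Gamma_{\mathcal{E}}$ is unambiguous. What remains is to show that $\Gamma_{\mathcal{E}}$ is a Poisson algebra homomorphism, i.e.\ a coupling.

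First I would confirm that $OutDer(H)$ is a commutative Poisson algebra. By the standing assumption of this section, $Der(H)$ has the trivial ring product and the commutator bracket. It is routine that $IDer(H)$ is an ideal. It is a subspace, since $ad_{\alpha h + \beta k} = \alpha\, ad_h + \beta\, ad_k$. It is closed under the trivial product. It is closed under brackets with arbitrary derivations, because $[D, ad_h] = ad_{D(h)}$ for $D \in Der(H)$, by the derivation property of $D$ on brackets. Hence the quotient carries the trivial product and the induced bracket. Linearity of $\Gamma_{\mathcal{E}}$ follows from Lemma \ref{a}(2) composed with the quotient map.

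Next comes multiplicativity. Since the product on $OutDer(H)$ is trivial, I must show that $\Gamma^t_{xy} \in IDer(H)$ for all $x,y \in K$. Identity (1) of (\ref{3.3}) gives $\Gamma^t_{xy}(h) = [h, f^t(x,y)] = ad_{-f^t(x,y)}(h)$. Therefore $\Gamma_{\mathcal{E}}(xy) = 0 = \Gamma_{\mathcal{E}}(x)\Gamma_{\mathcal{E}}(y)$.

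For the bracket, identity (2) of (\ref{3.3}) gives
\[
\Gamma^t_{[x,y]} = \Gamma^t_x\Gamma^t_y - \Gamma^t_y\Gamma^t_x - ad_{h^t(x,y)} = [\Gamma^t_x, \Gamma^t_y] - ad_{h^t(x,y)}.
\]
Modulo $IDer(H)$, this yields $\Gamma_{\mathcal{E}}([x,y]) = [\Gamma_{\mathcal{E}}(x), \Gamma_{\mathcal{E}}(y)]$. So $\Gamma_{\mathcal{E}}$ is a coupling, and $\Phi$ is a well-defined map into $Hom(K, OutDer(H))$.

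I expect the only delicate step to be the sign and ideal bookkeeping. This includes checking $[D, ad_h] = ad_{D(h)}$, so that $IDer(H)$ really is an ideal. It also includes reading identity (1) correctly, with the trivial ring structure on $H$ making the $\gamma$-terms vanish. Naturality, meaning compatibility with equivalences, is the content of the previous lemma, so I would simply cite it.
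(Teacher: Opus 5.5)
Your proof is correct and takes the paper's route. Well-definedness is exactly the preceding lemma, and your check that $\Gamma_{\mathcal{E}}$ is a Poisson algebra homomorphism uses identities (1) and (2) of (\ref{3.3}), giving $\Gamma^t_{xy}=ad_{-f^t(x,y)}$ and $\Gamma^t_{[x,y]}=[\Gamma^t_x,\Gamma^t_y]-ad_{h^t(x,y)}$; this fills in what the paper only asserts with ``consequently'' and is the same computation it uses in Proposition \ref{3.12}. One small correction to your bookkeeping remark: the $\gamma$-terms vanish because $H\subseteq Z_R(P)$, so $\gamma^t_y(h)=h\,t(y)=0$, and not merely because $H$ has trivial ring structure.
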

\begin{remark}
 The map $\Phi$ described in the above theorem is called the abstract kernel map.
 The abstract kernel map is not necessarily injective, that is, two non-equivalent ring center extensions of $H$ by $K$ may induce the same abstract kernels of $K$ to $H$.  
\end{remark}
\begin{example}
 Consider the Lie algebras $H=\mathbb{R}e_1$, $K= (span\{e_2,e_3\}, [, ~])$,  $P_1 = (\R^3, [,~ ]_1)$ and $P_2 = (\R^3, [,~ ]_2)$, where $[, ~]$, $[,~]_1$ and $[,~ ]_2$ are defined as follows: $$[e_2,~ e_3]=e_2,$$ $$[e_2,~ e_3]_1=e_2, [e_1,~ e_3]_1=0=[e_1,~ e_2]_1,$$ $$[e_2,~ e_3]_2=e_2, [e_1,~ e_3]_2=e_1, [e_1,~ e_2]_2 = 0,$$
where  $e_1=(1,0,0),e_2=(0,1,0), e_3=(0,0,1)\in \R^3$.
Clearly, $H$ is an ideal of both $P_1$ and $P_2$. Also, $P_1/H \cong K$ and $P_2/H \cong K.$
Since $[P_1,~ P_1]_1 = \mathbb{R}e_1$ and $[P_2,~ P_2]_2 = span\{e_1,e_2\}$, $P_1$ and $P_2$ are not isomorphic. So, ring center extensions  $\mathcal{E}(H, K) \equiv	{0}\longrightarrow  H\overset{i} \longrightarrow P_1\overset{\pi} \longrightarrow  K \longrightarrow {0}$ and $ \mathcal{E}'(H, K) \equiv	{0}\longrightarrow  H\overset{i'} \longrightarrow P_2\overset{\pi'} \longrightarrow  K \longrightarrow {0}$ are not equivalent. 

Since every Lie algebra is a commutative Poisson algebra with trivial ring structure,  $H$ and $K$ are commutative Poisson algebras.  So, we have $Der(H)=0$. Thus,  $OutDer(H)=0$. Therefore, the abstract kernel $\Gamma_{\mathcal{E}}: K \to OutDer(H)$ is a zero map for any ring center extension of $H$ by $K$. Hence, $\Phi$ is not injective.
\end{example}
\begin{theorem}\label{4.7}
 Let $H$ be a commutative Poisson algebra with trivial ring structure such that $Z(H) = \{0 \}.$ Then the map $\Phi$
 from $RExt(H, K)$  to the set $Hom(K, OutDer(H))$ is bijective. More explicitly, every abstract kernel $\eta$ of $K$ to $H$  is uniquely determined by an equivalence class of extensions in $RExt(H, K)$.  
\end{theorem}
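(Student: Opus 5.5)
The plan follows the classical argument for Lie algebras with centerless kernel. I will show that $\Phi$ is surjective by building, from a coupling, a pullback extension, and injective by showing that every ring center extension with a given coupling is equivalent to that pullback.

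First I record what the hypotheses give. Since $H$ has trivial ring structure, $Z(H)=\{h : [h,k]=0 \text{ for all } k\in H\}$. So $Z(H)=0$ says exactly that $ad: H\to Der(H)$, $h\mapsto ad_h$, is injective. The map $ad$ is a Poisson algebra homomorphism onto $IDer(H)$: the bracket condition is the Jacobi identity, and products are respected because $hk=0$ in $H$ and $Der(H)$ carries the trivial ring structure. The ring structure on $OutDer(H)$ is also trivial, so any coupling $\eta\in Hom(K,OutDer(H))$ must satisfy $\eta(xy)=0$ for all $x,y\in K$.

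\emph{Surjectivity.} Given a coupling $\eta$, let
$$P=\{(D,x)\in Der(H)\times K \ | \ D+IDer(H)=\eta(x)\},$$
with
$$(D,x)\cdot(D',y)=(0,xy), \qquad [(D,x),(D',y)]=([D,D'],[x,y]).$$
\begin{enumerate}
\item $P$ is closed under both operations: $(0,xy)\in P$ because $\eta(xy)=0$, and the bracket stays in $P$ because $\eta$ respects brackets.
\item $P$ is a commutative Poisson algebra. Commutativity and associativity of the product are immediate, the Lie axioms hold componentwise, and Leibniz reduces to $(0,[xy,z])=(0,[x,z]y+x[y,z])$.
\item The sequence $0\to H\to P\to K\to 0$, with $h\mapsto(ad_h,0)$ and $(D,x)\mapsto x$, is exact. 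Surjectivity onto $K$ holds because every class in $OutDer(H)$ has a representative. The kernel of the projection is $\{(D,0): D\in IDer(H)\}$, which is isomorphic to $H$ because $ad$ is injective.
\item It is a ring center extension, since $(ad_h,0)\cdot(D,x)=(0,0)$.
\item Its coupling is $\eta$. From $[(D,x),(ad_h,0)]=([D,ad_h],0)=(ad_{D(h)},0)$, any section $t(x)=(D_x,x)$ gives $\Gamma^t_x=D_x$, so $\Gamma^t_x+IDer(H)=\eta(x)$.
\end{enumerate}

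\emph{Injectivity.} Let $\mathcal{E}(H,K)$ be a ring center extension with $\Phi([\mathcal{E}])=\eta$. Define $\psi:P_{\mathcal{E}}\to P$ by $\psi(p)=(ad(p)|_H,\beta(p))$. I will check the following.
\begin{enumerate}
\item $\psi$ lands in the pullback. Since $H$ is an ideal, $ad(p)|_H\in Der(H)$ by the computation in Lemma 3.1(2). Writing $p=h+t(x)$, we get $ad(p)|_H=ad_h+\Gamma^t_x$, whose class is $\eta(\beta(p))$.
\item $\psi$ preserves products. For $k\in H$, $[pq,k]=p[q,k]+[p,k]q=0$ because $H\subseteq Z_R(P_{\mathcal{E}})$. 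Hence $ad(pq)|_H=0$, matching $(0,\beta(p)\beta(q))$.
\item $\psi$ preserves brackets, by the Jacobi identity.
\item $\psi$ is injective. If $\psi(p)=0$, then $p\in H$ and $p$ commutes with all of $H$, so $p\in Z(H)=0$.
\item $\psi$ is surjective. Given $(D,x)\in P$, pick $p$ with $\beta(p)=x$. Then $D-ad(p)|_H=ad_h$ for some $h\in H$, and $\psi(p+h)=(D,x)$.
\end{enumerate}
Since $\psi$ restricts to $h\mapsto(ad_h,0)$ on $H$ and commutes with the projections, it is an equivalence. Hence every extension with coupling $\eta$ is equivalent to the pullback, and $\Phi$ is injective.

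The step I expect to need the most care is the well-definedness of the pullback. One must make sure that the product on $P$ is forced to be $(0,xy)$, which relies on $\eta(xy)=0$ coming from the trivial ring structure on $OutDer(H)$. One must also check that $ad(p)|_H$ respects products exactly because $H\subseteq Z_R(P_{\mathcal{E}})$. The remaining verifications are routine.
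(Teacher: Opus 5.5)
Your proof is correct.

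\textbf{Surjectivity.} This is essentially the paper's argument. You use the same pullback $L\subseteq Der(H)\times K$, the same embedding $h\mapsto(ad_h,0)$, and the same computation $[(D,x),(ad_h,0)]=(ad_{D(h)},0)$, which gives $\Gamma^t_x=D_x$. You also make explicit two points the paper leaves tacit: $L$ is closed under the product because $\eta(xy)=\eta(x)\eta(y)=0$ in $OutDer(H)$, and the image of $H$ lies in the ring center of $L$.

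\textbf{Injectivity.} Here your route genuinely differs.

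\emph{The paper's route.} It takes two extensions with the same coupling and uses $Z(H)=0$ to get a unique linear $g$ with $\Gamma^{t_1}_x=ad_{g(x)}+\Gamma^{t_2}_x$. It then asserts (``it can be shown'') that $(I_H,g,I_K)$ satisfies the morphism conditions (\ref{x}). Finally it invokes Theorem \ref{c} to pass from equivalent factor systems to equivalent extensions.

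\emph{Your route.} You bypass factor systems entirely. You show that every ring center extension with coupling $\eta$ is isomorphic to the pullback via the canonical map $\psi(p)=(ad(p)|_H,\beta(p))$. The ring center hypothesis is exactly what makes $\psi$ multiplicative, and $Z(H)=0$ is exactly what makes it injective.

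\emph{What each buys.} Your argument is self-contained and does not rely on the category equivalence of Theorem \ref{c} or on the unchecked verification of (\ref{x}). It also gives more: the pullback is a canonical representative of the class. Moreover, any equivalence $\phi$ fixes $H$ pointwise, so $[\phi(p),k]=[p,k]$ for $k\in H$ and hence $\psi_2\circ\phi=\psi_1$; thus the equivalence between two such extensions is unique. The paper's route, in exchange, produces the equivalence explicitly at the level of cochains through $g$. That is the form in which such comparisons are reused in Proposition \ref{3.12} and Theorem \ref{4.13}.
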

\begin{proof}
   Let $ \eta \in Hom(K, OutDer(H))$ be an abstract kernel of $K$ to $H$. Consider the Pull Back Diagram 
   \[
\xymatrix@C=1.5cm@R=1.5cm{
   L \ar[d]_{p_1} \ar[r]^{p_2} & K \ar[d]^{\eta} \\
   Der(H) \ar[r]_{\pi} & OutDer(H)
}
\]
 where, $$L = \{( \Gamma, x) \ |  \
\Gamma \in Der(H) \ \text{and} \ \Gamma + IDer(H) = \eta(x) \}.$$ Clearly, $L$ is the Poisson subalgebra of the direct product $Der(H) \times K$. Here, $p_1$ and $p_2$ denote the canonical projections, and it is easy to see that $p_2: L \to K$  is a surjective Poisson algebra homomorphism. Its kernel is
 $$kerp_2= \{ (\Gamma, 0) \ | \ \Gamma + IDer(H)= \eta(0)= IDer(H)\}= IDer(H) \times \{0\}.$$
Since  $Z(H) = \{0 \},$ the map $\alpha$ from $H$ to $L$ defined by   $ \alpha(h)=(ad_h, 0) $  
is an injective Poisson algebra homomorphism with $image \ \alpha=ker \ p_2.$  This gives a ring center extension $\mathcal{E}$ of $H$ by $K$ given by the exact sequence 
$$ \mathcal{E}(H, K) \equiv	{0}\longrightarrow  H\overset{\alpha} \longrightarrow L\overset{p_2} \longrightarrow  K \longrightarrow {0.}$$
By the axiom of choice, there is a map $\theta$ from $K$ to $Der(H)$ such that $\theta(x) + IDer(H)=\eta(x)$. This determines a section $$t: K \to L, \ \ \ t(x)=(\theta(x), x). $$
The corresponding abstract kernel  $\Phi([\mathcal{E}])$ associated with the ring center extension $\mathcal{E}$ is given by $\Phi([\mathcal{E}])(x)= \Gamma_x^t + IDer(H),$
where $\Gamma_x^t$ is defined by $[t(x),\alpha(h)]= \alpha(\Gamma^t_x(h)).$
Explicitly,
$$ \alpha(\Gamma^t_x(h))= [(\theta(x),x),(ad_h,0)]=([\theta(x),ad_h],0)= (ad_{\theta(x)(h)},0)= \alpha(\theta(x)(h)).$$
Thus, $\Gamma^t_x(h)=\theta(x)(h)$ for all $h\in H,$ so $\Gamma^t_x=\theta(x)$. Consequently, $$\Phi([\mathcal{E}])(x)= \Gamma^t_x+ IDer(H)=\theta(x)+ IDer(H)=\eta(x)$$ for all $x \in K$. This proves that $\Phi$ is surjective.

To prove injectivity, suppose that $\Phi([\mathcal{E}_1])= \Phi([\mathcal{E}_2])$, where  
$$ \mathcal{E}_1 \equiv	{0}\longrightarrow  H\overset{\alpha_1} \longrightarrow L_1\overset{\beta_1} \longrightarrow  K \longrightarrow {0}, \ \ \ \  \mathcal{E}_2 \equiv	{0}\longrightarrow  H\overset{\alpha_2} \longrightarrow L_2\overset{\beta_2} \longrightarrow  K \longrightarrow {0}$$ are two ring center extensions of $H$ by $K$. Let $t_1,$ $t_2$ be sections of $\mathcal{E}_1$ and $ \mathcal{E}_2,$ respectively, with corresponding ring center factor systems $( H, K,  f^{t_1}, h^{t_1}, \Gamma^{t_1})$ and  $(H, K,  f^{t_2}, h^{t_2}, \Gamma^{t_2})$. 
Now, we have
$$\Gamma^{t_1}_x + IDer(H)=\Gamma^{t_2}_x + IDer(H), $$
for all $x\in K.$ Since $Z(H)= \{0\},$ there exists a unique vector space homomorphism $g: K \to H$ such that 
\begin{align} \label{3.18}
    \Gamma^{t_1}_x=ad_{g(x)} + \Gamma^{t_2}_x
\end{align}
for all $x\in K.$ Now by equation $(\ref{3.3})$, we have
\begin{equation} \label{3.19}
\begin{rcases}
    \Gamma^{t_1}_{xy}(h) &= [h, f^{t_1}(x, y)]\\
   \Gamma ^{t_1}_{[x, y]} (h) &= \Gamma^{t_1}_x \Gamma^{t_1}_y(h) - \Gamma^{t_1}_y \Gamma^{t_1}_x(h) - [h^{t_1}(x, y), h] 
\end{rcases}
 \end{equation}
   and
   \begin{equation} \label{3.20}
   \begin{rcases}
       \Gamma^{t_2}_{xy}(h) &= [h, f^{t_2}(x, y)]\\  
    \Gamma ^{t_2}_{[x, y]} (h) &= \Gamma^{t_2}_x \Gamma^{t_2}_y(h) - \Gamma^{t_2}_y \Gamma^{t_2}_x(h) - [h^{t_2}(x, y), h]
   \end{rcases}  
 \end{equation}
   for all $x,y \in K$ and $h\in H$. Using equations $(\ref{3.18})$, $(\ref{3.19})$, $(\ref{3.20})$, and the fact that $Z(H)=\{0\}$, it can be shown that equation $(\ref{x})$ holds with $\nu= I_K$ and $\lambda=I_H$. It follows that $(I_H, g, I_K)$ is an equivalence between the ring center factor system $( H, K,  f^{t_1}, h^{t_1}, \Gamma^{t_1})$ and $(H, K, f^{t_2}, h^{t_2}, \Gamma^{t_2})$. By Theorem $(\ref{c})$, it follows that extensions $\mathcal{E}_1$ and $\mathcal{E}_2$ are equivalent. This completes the proof. 
\end{proof}

Now, we discuss the following problem:

\begin{problem}
Let $H$ be an abelian Poisson algebra. Classify all ring center extensions of $H$ by $K$ (up to equivalence) with the given abstract kernel $ \Gamma$.  
\end{problem} 
\begin{remark}
    Let $H$ be an abelian Poisson algebra, and $K$ be a commutative Poisson algebra. Then the abstract kernel $\Gamma$ is a Poisson algebra homomorphism from  $K$ to $Der(H)$.
\end{remark}
\begin{definition} \label{D3.5}
    Let $H$ be an abelian Poisson algebra, and $K$ be a commutative Poisson algebra. Then the triple $(f, h,\Gamma )$ is said to be a $2$-cocycle of a commutative Poisson algebra $K$ with coefficient in an abelian Poisson algebra $H$, if the triple $(f, h, \Gamma)$ is such that $f$ is symmetric bilinear and $h$ is an alternating maps from $K \times K$ to $H$, and $\Gamma: K \to Der(H)$ is Poisson algebra homomorphism satisfying the following conditions for all $x,y,z \in K$ and $h\in H$:
    \begin{enumerate}
    \item $\Gamma_{xy}(h) = 0.$
    \item $f(x y, z) = f(x,y z).$
    \item $ h([x,y], z) + h([y, z], x) + h([z,x], y)=\Gamma_z(h(x, y)) + \Gamma_x(h(y, z)) + \Gamma_y(h(z, x)).$
    \item $h(xy,z)  =  \Gamma_z(f(x,y))  + f([x,z],y) + f(x,[y,z]).$
\end{enumerate}
\end{definition}

\begin{remark}
    Let $\mathcal{E}(H,K)$ be a ring center extension of an abelian Poisson algebra $H$ by a commutative Poisson algebra $K$ with a choice of section $t$. Then from the above discussion, it is clear that we have a $2$-cocycle $(f^t, h^t,\Gamma )$.

    Conversely, if we have a $2$-cocycle $(f, h,\Gamma )$ of a commutative Poisson algebra $K$ with coefficient in an abelian Poisson algebra $H$, then from Proposition $\ref{P3.2}$, it can be seen that there exists a ring center extension $\mathcal{E}(H,K)$ of $H$ by $K$ with a choice of section $t$ such that $f^t=f, h^t=h$ and $\Gamma_{\mathcal{E}}=\Gamma$.
\end{remark}

\begin{definition} \label{F}
 Two $2$-cocycle $(f^s, h^s,\Gamma )$ and $(f^t, h^t,\Gamma )$ of the ring center extension $\mathcal{E}(H,K)$ are said to be equivalent if there exists a vector space homomorphism $g: K \to H$ satisfying
    \begin{enumerate}
        \item $f^s(x,y)= - g(xy) + f^t(x,y)$
        \item $ h^s(x, y)=\Gamma_{x}(g(y)) - \Gamma_{y}(g(x)) - g([x,y]) + h^t(x, y).$
    \end{enumerate}
\end{definition}

Let $Z^2_{ \Gamma}(K,H)$ denote the set of all $2$-cocycles $(f, h, \Gamma)$ of a commutative Poisson algebra $K$ with coefficients in an abelian Poisson algebra $H$ with the abstract kernel $\Gamma$.

We define an operation on $Z^2_{ \Gamma}(K,H)$ by $$(f, h, \Gamma) + (f', h', \Gamma) = (f+f', h+h', \Gamma).$$ Since $H$ is abelian, it follows immediately that $(f+f', h+h', \Gamma) \in Z^2_{ \Gamma}(K,H) $. Moreover, for every $(f, h, \Gamma)\in Z^2_{ \Gamma}(K,H),$ element $(-f, -h, \Gamma)$ also belongs to  $Z^2_{ \Gamma}(K,H).$ Hence, $Z^2_{ \Gamma}(K,H)$ is an abelian group with respect to the operation defined above. \\
Now, let $B^2_{ \Gamma}(K,H)$ denote the set of all $2$-cocycles that are equivalent to the trivial $2$-cocycle $(f_0, h_0, \Gamma).$ More precisely, by Definition \ref{F}, we have $(f, h, \Gamma)\in B^2_{ \Gamma}(K,H) $  if and only if there exists a vector space homomorphism $g: K \to H$ such that $$f(x,y)= - g(xy), $$ $$ h(x, y)=\Gamma_{x}(g(y)) - \Gamma_{y}(g(x)) - g([x,y]).$$ 
Note that, for any vector space homomorphism $g: K \to H$, the pair $(g^*, \partial g,\Gamma )$ is a member of $Z^2_{ \Gamma}(K,H)$, where $g^*$ and $\partial g$ are maps from $K \times K$ to $H$ given by $g^* (x, y) = - g(xy)$ and $\partial g(x, y) = \Gamma_{x}(g(y)) - \Gamma_{y}(g(x)) - g([x,y])$, respectively.
The elements of $B^2_{ \Gamma}(K,H)$ are called the $2$-coboundaries of $K$ with coefficient in $H$ associated with the abstract kernel $ \Gamma$. The quotient group $$ H^2_{ \Gamma}(K,H) = Z^2_{ \Gamma}(K,H)/B^2_{ \Gamma}(K,H)$$ is called the second cohomology group of $K$ with coefficient in $H$ associated with the abstract kernel $\Gamma$.     

\begin{theorem}
Let $H$ be an abelian Poisson algebra, and $K$ be a commutative Poisson algebra. Let $ \Gamma$ be an abstract kernel of $K$ to $H.$ Then there is a natural bijective correspondence between the set $RExt_{ \Gamma}(H, K)$ of equivalence classes of ring center extensions of $H$ by $K$ with the given abstract kernel $ \Gamma$ and the second cohomology group ${H^2}_{ \Gamma}(K, H).$  
    
\end{theorem}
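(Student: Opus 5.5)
We will define a map $\Psi: RExt_{\Gamma}(H,K) \to H^2_{\Gamma}(K,H)$ by $\Psi([\mathcal{E}]) = (f^t, h^t, \Gamma) + B^2_{\Gamma}(K,H)$, where $t$ is any section of a representative ring center extension $\mathcal{E}$. By the remark following Definition \ref{D3.5}, $(f^t,h^t,\Gamma)$ is a $2$-cocycle. Since $H$ is abelian, $IDer(H)=0$, so $\Gamma^t_x=\Gamma_x$ for every section $t$ and $OutDer(H)=Der(H)$. The proof then splits into four steps: well-definedness, injectivity, surjectivity, and naturality. Naturality is simply the observation that no choices survive in the end.

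\emph{Well-definedness.} First, let $s=t+g$ be another section of the same extension, with $g:K\to H$ linear. Since $H\subseteq Z_R(P)$, we get $s(x)s(y)=t(x)t(y)$, hence
\[
f^s(x,y)=t(x)t(y)-t(xy)-g(xy)=f^t(x,y)-g(xy).
\]
For the bracket, $[g(x),g(y)]=0$ because $H$ is abelian, so
\[
h^s(x,y)=h^t(x,y)+\Gamma_x(g(y))-\Gamma_y(g(x))-g([x,y]).
\]
These are exactly the conditions of Definition \ref{F}, so the two cocycles differ by $(g^*,\partial g,\Gamma)\in B^2_\Gamma$. Second, if $(I_H,\phi,I_K)$ is an equivalence $\mathcal{E}\to\mathcal{E}'$, then $\phi\circ t$ is a section of $\mathcal{E}'$. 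Because $\phi$ is a Poisson homomorphism fixing $H$, we get $f^{\phi t}=f^t$ and $h^{\phi t}=h^t$. So equivalent extensions give the same class.

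\emph{Injectivity.} Suppose $(f^{t_1},h^{t_1},\Gamma)-(f^{t_2},h^{t_2},\Gamma)=(g^*,\partial g,\Gamma)$ for some linear $g$. We check that $(I_H,g,I_K)$, up to the sign convention for $g$, satisfies the morphism conditions (\ref{x}) between the ring center factor systems. With $H$ abelian and $\lambda=I_H$, $\nu=I_K$, the terms $[g(x),g(y)]$ and $[g(x),\lambda(h)]$ vanish. Then (\ref{x})(1) and (\ref{x})(3) are exactly the coboundary relations, and (\ref{x})(2) is trivial. Condition (\ref{x})(4) should follow from (3) together with cocycle condition (5) of (\ref{3.3}) for both factor systems. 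Theorem \ref{c} then yields that $\mathcal{E}_1$ and $\mathcal{E}_2$ are equivalent. Concretely, the map $P_1\to P_2$ is $h+t_1(x)\mapsto h+g(x)+t_2(x)$ with the appropriate sign of $g$, and one can also verify directly that it preserves the product and the bracket.

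\emph{Surjectivity.} Given $(f,h,\Gamma)\in Z^2_\Gamma(K,H)$, the remark after Definition \ref{D3.5} (via Proposition \ref{P3.2}) gives a ring center extension on $H\times K$ with section $t(x)=(0,x)$ such that $f^t=f$, $h^t=h$ and abstract kernel $\Gamma$. Its class maps to $[(f,h,\Gamma)]$.

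The main obstacle I expect is condition (\ref{x})(4) in the injectivity step, which is listed in the paper with a somewhat garbled parenthesization. My plan there is to avoid the factor-system bookkeeping and verify the equivalence map directly. This means checking the product identity, which uses only $f^{t_1}=f^{t_2}-g^*$ and $H\cdot P=0$, and the bracket identity, which uses only the $h$-relation and the fact that $\Gamma$ is independent of the section. This makes (\ref{x})(4) redundant, since it is a consequence of the Leibniz rule in $P_1$ and $P_2$. A minor point is making the sign conventions in Definition \ref{F} consistent with $s=t+g$; I will fix $g$ by $t_2=t_1+g$ throughout.
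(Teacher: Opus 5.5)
Your proposal is correct and follows the paper's proof. It uses the same section-based map to $H^2_{\Gamma}(K,H)$, gets surjectivity from Proposition~\ref{P3.2}, and gets injectivity by turning a coboundary into an equivalence $(I_H,g,I_K)$ of ring center factor systems and invoking Theorem~\ref{c}. Your change-of-section computation and the transport $t\mapsto\phi\circ t$ fill in details the paper only asserts.

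One bookkeeping remark. With your normalization $(f^{t_1}-f^{t_2},h^{t_1}-h^{t_2})=(g^*,\partial g)$, the map $h+t_1(x)\mapsto h+g(x)+t_2(x)$ works exactly as written. You should drop the stated convention $t_2=t_1+g$, since it would flip the sign. Also, (\ref{x})(4) does follow, but from (\ref{x})(1), (\ref{x})(3), $\Gamma_{xy}=0$ and condition (4) of Definition~\ref{D3.5} for $t_2$ alone.
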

\begin{proof}
Let  $\mathcal{E}(H, K)$ and  $\mathcal{E'}(H, K)$ be equivalent ring center extensions of $H$ by $K$ with the abstract kernel $ \Gamma$. Let $(f^t,h^t,\Gamma)$ and $(f^{t'} ,h^{t'},\Gamma)$ be 2-cocycles corresponding to sections $t$ and $t'$ of $\mathcal{E}(H, K)$ and  $\mathcal{E'}(H, K)$, respectively. Then there exists a vector space homomorphism $g: K \to H$ such that
  $$f^{t'}(x,y)= -g(xy) + f^t(x,y),$$ 
  $$h^{t'}(x,y)= \Gamma_x(g(y)) - \Gamma_y(g(x)) - g([x,y]) + h^t(x,y).$$ 
  This implies that 
  $$(f^t, h^t,\Gamma)+ B^2_{ \Gamma}(K,H)=(f^{t'},h^{t'},\Gamma)+ B^2_{ \Gamma}(K,H).$$ 
  Therefore, we have a map 
  $\eta: RExt_{ \Gamma}(H,K) \to H^2_{ \Gamma}(K,H)$ 
  defined by $\eta([\mathcal{E}])=(f^t, h^t,\Gamma)$ $+ B^2_{ \Gamma}(K,H),$
  where $t$ is a section of $\mathcal{E}(H, K)$.
  Now, let $(f, h,\Gamma)\in Z^2_{ \Gamma}(K,H).$ Then by Proposition $\ref{P3.2}$, there exists a ring center extension $\mathcal{E}(H, K)$ of $H$ by $K$ with section $t$ such that $f^t=f$, $h^t=h$ and $\Gamma_{\mathcal{E}}=\Gamma$. Hence, $\eta$ is surjective. \\
  Let $\mathcal{E}(H, K)$ and $\mathcal{E'}(H, K)$ be ring center extensions of $H$ by $K$ with sections $t$ and $t'$, respectively, and abstract kernel $ \Gamma$ such that 
  $\eta([\mathcal{E}])=\eta([\mathcal{E'}]),$
  that is,
  $(f^t, h^t,\Gamma)+ B^2_{ \Gamma}(K,H)=(f^{t'},h^{t'},\Gamma)+ B^2_{ \Gamma}(K,H).$ 
  Then, there exists a vector space homomorphism $g: K \to H$  such that
  $$f^{t'}(x,y)= -g(xy) + f^t(x,y),$$  
  $$h^{t'}(x,y)= \Gamma_x(g(y)) - \Gamma_y(g(x)) - g([x,y]) + h^t(x,y).$$ 
  Thus, the ring center factor systems $(H,K,f^t,h^t, \Gamma)$ and $(H,K,f^{t'},h^{t'}, \Gamma)$ 
  are equivalent. Therefore, $\mathcal{E}(H, K)$ and $\mathcal{E'}(H, K)$ are equivalent. So $\eta$ is injective.
\end{proof}
Let $H$  be a commutative Poisson algebra with trivial ring structure (not necessarily abelian), and $K$ be a commutative Poisson algebra. Suppose $ \Gamma: K \to OutDer(H)$ is an abstract kernel. If $\delta \in Der(H)$ and
$h \in Z(H),$ then $\delta([h, k]) = [\delta(h), k] + [h, \delta(k)]$ implies $[\delta(h), k] = 0, $ and also $\delta(h)k = 0$ for all $k \in  H.$ Thus
 $\delta(h) \in Z(H).$ Hence, a Poisson algebra homomorphism $\chi : Der(H) \to Der(Z(H)) $ is induced, which is defined by $\chi(\delta) = \delta|_{Z(H)}$.
\begin{proposition}
    Let $\sigma: K \to Der(H)$ be a vector space homomorphism, which is a
lifting of $\Phi$ in the sense that $\nu \circ \sigma = \Gamma,$ where $\nu: Der(H) \to OutDer(H)$ is the quotient map. Then $\chi$
induces a map $ \overline{\chi}$ from $Hom(K, OutDer(H))$ to the set $Hom(K, Der(Z(H)))$ given by $\overline{\chi}(\Gamma) = \chi \circ \sigma $.
\end{proposition}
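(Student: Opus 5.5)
The key claim is that $\overline{\chi}(\Gamma)=\chi\circ\sigma$ does not depend on the chosen lifting $\sigma$ and is a Poisson algebra homomorphism $K\to Der(Z(H))$. The plan is to show first that $\chi$ kills inner derivations, so that it factors through $OutDer(H)$. After that, both independence and the homomorphism property follow from properties of the quotient map $\nu$.

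\textbf{Step 1: $\chi$ vanishes on $IDer(H)$.} Take $ad_k\in IDer(H)$ and $h\in Z(H)$. Then
\[
\chi(ad_k)(h)=[k,h]=-[h,k]=0,
\]
since $h$ is Poisson central. So $IDer(H)\subseteq \ker\chi$, and $\chi$ induces a well-defined linear map
\[
\widetilde{\chi}:OutDer(H)\to Der(Z(H)),\qquad \widetilde{\chi}(\delta+IDer(H))=\delta|_{Z(H)}.
\]
For this we already know, from the discussion preceding the proposition, that $\chi(\delta)$ maps $Z(H)$ into $Z(H)$.

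\textbf{Step 2: $\widetilde{\chi}$ is a Poisson algebra homomorphism.}
\begin{itemize}
\item The ring structures on $Der(H)$, $OutDer(H)$ and $Der(Z(H))$ are all trivial, so multiplicativity is automatic.
\item For brackets, restriction of a commutator of derivations to the invariant subspace $Z(H)$ is the commutator of the restrictions. Hence $\chi([\delta_1,\delta_2])=[\chi(\delta_1),\chi(\delta_2)]$, and this descends to $\widetilde{\chi}$ because $\nu$ is a surjective Poisson homomorphism.
\end{itemize}

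\textbf{Step 3: conclude.} For any lifting $\sigma$ with $\nu\circ\sigma=\Gamma$,
\[
\chi\circ\sigma=\widetilde{\chi}\circ\nu\circ\sigma=\widetilde{\chi}\circ\Gamma .
\]
This has two consequences.
\begin{itemize}
\item \emph{Independence of the lift.} If $\sigma'$ is another lifting, then $\sigma(x)-\sigma'(x)\in IDer(H)$ for every $x$. Step 1 then gives $\chi\circ\sigma=\chi\circ\sigma'$, so $\overline{\chi}(\Gamma)$ depends only on $\Gamma$.
\item \emph{Homomorphism property.} Although $\sigma$ itself is only linear, $\chi\circ\sigma=\widetilde{\chi}\circ\Gamma$ is a composite of Poisson homomorphisms. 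Hence $\overline{\chi}(\Gamma)\in Hom(K,Der(Z(H)))$, and $\overline{\chi}$ is a well-defined map on $Hom(K,OutDer(H))$.
\end{itemize}

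\textbf{Remaining point: existence of a linear lift.} A lift $\sigma$ always exists: choose a linear complement of $IDer(H)$ in $Der(H)$ and compose $\Gamma$ with the resulting linear section of $\nu$.

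I expect the main obstacle to be the homomorphism property. A direct verification on $\chi\circ\sigma$ would be awkward, because $\sigma([x,y])$ and $[\sigma(x),\sigma(y)]$ differ by an inner derivation. Step 1 is exactly what resolves this, so routing the argument through $\widetilde{\chi}\circ\Gamma$ is the crucial move.
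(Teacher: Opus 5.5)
Your proof is correct and rests on the same key fact as the paper's, namely that inner derivations restrict to zero on $Z(H)$. The paper applies this directly to $\chi\circ\sigma$, writing $\sigma([x,y])=ad_{f(x,y)}+[\sigma(x),\sigma(y)]$, $\sigma(xy)=ad_{g(x,y)}$, and separately $\sigma(x)=ad_{q(x)}+\sigma'(x)$ for independence; your factorization $\chi=\widetilde{\chi}\circ\nu$ packages this more cleanly, since $\chi\circ\sigma=\widetilde{\chi}\circ\Gamma$ gives independence of the lift and the homomorphism property at once, which makes your separate independence bullet redundant.
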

\begin{proof}
 Since $\Gamma: K \to OutDer(H)$ is a Poisson algebra homomorphism, we have
 $$\sigma ([x, y]) + IDer(H) = [\sigma(x), \sigma(y)] + IDer(H), \ \ \ \sigma(xy) + IDer(H) = IDer(H).$$
 Hence, there exist maps $f, g: K\times K \to H$  such that $$\sigma ([x, y])  = ad_{f(x, y)} + [\sigma(x), \sigma(y)], \ \ \ \sigma(xy)= ad_{g(x, y)}.$$ It follows that $$(\chi \circ \sigma) [x, y] = [(\chi \circ \sigma) (x), (\chi \circ \sigma)(y)], \ \ \ (\chi \circ \sigma)(xy) = (\chi \circ \sigma)(x) (\chi \circ \sigma)(y)$$ for all $x, y\in K.$ Thus, $\chi \circ \sigma$  is a Poisson algebra homomorphism from $K$ to $Der(Z(H))$. Now, let $\sigma'$ be another lifting of $\Gamma.$ Then $\sigma(x) + IDer(H) = \sigma'(x) + IDer(H)$ for all $x\in K.$ Therefore, there exists a vector space homomorphism $q : K \to H$ such that $\sigma(x) = ad_{q(x)} + \sigma'(x).$  It follows that $(\chi \circ \sigma)(x) = (\chi \circ \sigma')(x).$ Hence, $\chi \circ \sigma$ depends only on $\Gamma.$  Since $Z(H)$ is an abelian Poisson algebra, $Der(Z(H)) = OutDer(Z(H)).$ Thus, $\chi$ induces a map $$\overline{\chi} : Hom(K, OutDer(H)) \to Hom(K,  Der(Z(H)))$$  defined by $\overline{\chi}(\Gamma) = \chi \circ \sigma $.      
\end{proof}

\begin{proposition} \label{3.12}
 Let $$ \mathcal{E} \equiv	{0}\longrightarrow  H \overset{\alpha} \longrightarrow P \overset{\beta} \longrightarrow  K \longrightarrow {0}$$ and   $$ \mathcal{E'} \equiv	{0}\longrightarrow  H\overset{\alpha'} \longrightarrow P' \overset{\beta'} \longrightarrow  K \longrightarrow {0}$$   
be two ring center extensions of a commutative Poisson algebra $H$ with trivial ring structure (not necessarily abelian) by a commutative Poisson algebra $K$ such that $\Gamma_{\mathcal{E}} = \Gamma_{\mathcal{E'}} = \Gamma. $ Then there is a section $t$ of $\mathcal{E}$ and a section $t'$ of $\mathcal{E'}$ such that $\Gamma^t = \Gamma^{t'} = \overline{\chi}(\Gamma),$ and $- f^t(x, y) + {f^{t'}}(x, y)$ and $- h^t(x, y) + {{h^{t'}}(x, y)}$ belongs in $Z(H)$ for all $x, y \in K.$ If the maps $f,h: K \times K \to Z(H)$ defined by $$f(x, y) = - f^t(x, y) + {f^{t'}}(x, y), \ \ \ h(x, y) = - h^t(x, y) + {{h^{t'}}(x, y)} $$ for all $x,y \in K$ then $(f,h,\overline{\chi}(\Gamma)) \in Z^2_{\overline{\chi}(\Gamma)}(K, Z(H)).$  
\end{proposition}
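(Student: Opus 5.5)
We would first produce the two sections by adjusting an arbitrary section of $\mathcal{E'}$ so that its derivation map agrees exactly with that of a section of $\mathcal{E}$. Then we would read off the centrality of $f$ and $h$ from conditions $(1)$ and $(2)$ of $(\ref{3.3})$. Finally, the cocycle identities of Definition \ref{D3.5} would follow by subtracting the corresponding identities of $(\ref{3.3})$ for the two sections. Throughout, we interpret ``$\Gamma^t=\Gamma^{t'}=\overline{\chi}(\Gamma)$'' as: $\Gamma^t=\Gamma^{t'}$ in $Der(H)$, and $\chi\circ\Gamma^t=\overline{\chi}(\Gamma)$.

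\emph{Step 1 (choice of sections).} Take any section $t$ of $\mathcal{E}$ and any section $s$ of $\mathcal{E'}$. Since $\Gamma_{\mathcal{E}}=\Gamma_{\mathcal{E'}}=\Gamma$, we have $\Gamma^{s}_x-\Gamma^{t}_x\in IDer(H)$ for every $x\in K$.
\begin{itemize}
\item The map $h\mapsto ad_h$ from $H$ onto $IDer(H)$ is linear and surjective. We fix a linear right inverse $\rho: IDer(H)\to H$, obtained by choosing a complement of $Z(H)$ in $H$.
\item Put $g(x)=\rho(\Gamma^{s}_x-\Gamma^{t}_x)$. This is a vector space homomorphism $K\to H$ with $\Gamma^s_x=\Gamma^t_x+ad_{g(x)}$.
\item Define $t'=s-g$. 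It is again a section of $\mathcal{E'}$, and by the computation preceding the lemma on independence of sections, $\Gamma^{t'}_x=\Gamma^s_x-ad_{g(x)}=\Gamma^t_x$.
\end{itemize}
Since $\Gamma^t$ is a lifting of $\Gamma$, the previous proposition gives $\chi\circ\Gamma^t=\overline{\chi}(\Gamma)$. That proposition also shows $\overline{\chi}(\Gamma)$ is a Poisson algebra homomorphism $K\to Der(Z(H))$, which Definition \ref{D3.5} requires.

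\emph{Step 2 (centrality).} By $(\ref{3.3})(1)$ for both sections and $\Gamma^t=\Gamma^{t'}$, we get $[h,f^t(x,y)]=\Gamma^t_{xy}(h)=\Gamma^{t'}_{xy}(h)=[h,f^{t'}(x,y)]$ for all $h\in H$. Hence $[f^{t'}(x,y)-f^t(x,y),h]=0$. Because $H$ has trivial ring structure, this gives $f(x,y)\in Z(H)$. Likewise, subtracting $(\ref{3.3})(2)$ for the two sections yields $[h^{t'}(x,y)-h^t(x,y),h]=0$, so $h(x,y)\in Z(H)$.

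\emph{Step 3 (cocycle identities).} Bilinearity and symmetry of $f$, and bilinearity and alternation of $h$, are inherited from $f^t,f^{t'},h^t,h^{t'}$.
\begin{itemize}
\item Condition $(1)$ of Definition \ref{D3.5}: for $c\in Z(H)$, $\overline{\chi}(\Gamma)_{xy}(c)=\Gamma^t_{xy}(c)=[c,f^t(x,y)]=0$.
\item Condition $(2)$: subtract $(\ref{3.3})(3)$ for $t$ from that for $t'$.
\item Condition $(3)$: subtract $(\ref{3.3})(4)$ for $t$ from that for $t'$, using $\Gamma^t=\Gamma^{t'}$ and linearity of $\Gamma^t_z$. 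This gives the identity with $\Gamma^t$ applied to values of $h$. Since these values lie in $Z(H)$, $\Gamma^t$ acts on them as $\overline{\chi}(\Gamma)$.
\item Condition $(4)$: argue identically from $(\ref{3.3})(5)$.
\end{itemize}
Hence $(f,h,\overline{\chi}(\Gamma))\in Z^2_{\overline{\chi}(\Gamma)}(K,Z(H))$.

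The main obstacle is Step 1: we need the correcting map $g$ to be linear, so that $t'$ is a genuine (linear) section. The remaining steps are routine subtractions.
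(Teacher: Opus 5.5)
Your proposal is correct and follows the paper's proof essentially step for step: you adjust a section of $\mathcal{E'}$ by a linear $g\colon K\to H$ so that $\Gamma^{t'}=\Gamma^{t}$, read off $f(x,y),h(x,y)\in Z(H)$ from conditions (1) and (2) of $(\ref{3.3})$, and obtain the cocycle identities by subtracting the remaining conditions of $(\ref{3.3})$ for the two sections. You also supply details the paper leaves implicit, namely why $g$ can be taken linear (a linear right inverse of $h\mapsto ad_h$, whose kernel is $Z(H)$ because $H$ has trivial ring structure) and the explicit check of the ``straightforward'' cocycle conditions, including that $\Gamma^t$ acts on $Z(H)$ as $\overline{\chi}(\Gamma)$.
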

\begin{proof}
    Let $t$ be a section of $\mathcal{E}$ and $s$ be a section of $\mathcal{E'}$. Since $\Gamma_{\mathcal{E}} = \Gamma_{\mathcal{E'}},$ we have $$\Gamma^t(x) + IDer(H) = \Gamma^{s}(x) + IDer(H)$$
  for all $x\in K.$ Thus, there exists a vector space homomorphism $g : K \to H $ such that $\Gamma^t(x) = ad_{g(x)} + \Gamma^{s}(x)$ for all $x\in K.$ Define a map $t' : K \to P'$ by $t'(x) = g(x) + s(x).$ Then $t'$  is also a section of $\mathcal{E'},$ and moreover, $\Gamma^{t'}(x) = ad_{g(x)} + \Gamma^{s}(x) = \Gamma^{t}(x) $  for all $x\in K.$ Hence, $\Gamma^{t} = \Gamma^{t'}.$ Now 
  $$f^t(x, y)= t(x) t(y) - t(x y),  \ \ \  f^{t'}(x, y)= t'(x) t'(y) - t'(x y), $$ $$ h^t(x, y) = [t(x), t(y)] - t([x, y]),  \ \ \ h^{t'}(x, y) = [t'(x), t'(y)] - t'([x, y]).$$ Hence, $ad_{f^t(x, y)} = - \Gamma^t_{xy} = - \Gamma^{t'}_{xy} = ad_{{f^{t'}}(x, y)} $ and $ ad_{h^t(x, y)} = [\Gamma^t_{x}, \Gamma^t_{y} ] - \Gamma^t_{[x, y]} = [\Gamma^{t'}_{x}, \Gamma^{t'}_{y} ] - \Gamma^{t'}_{[x, y]} = ad_{{h^{t'}}(x, y)} $ for all $x, y \in K.$  Thus, $ad_{- f^t(x, y) + {{f^{t'}}(x, y)} } = ad_{- h^t(x, y) + {{h^{t'}}(x, y)} } = 0.$ This shows that $- f^t(x, y) + {{f^{t'}}(x, y)}$ and  $ - h^t(x, y) + {{h^{t'}}(x, y)}$ belongs in $Z(H).$ Put  $$f(x, y) = - f^t(x, y) + {f^{t'}}(x, y), \  \text{and}\ \ h(x, y) = - h^t(x, y) + {{h^{t'}}(x, y)}. $$ Then, it is straightforward to verify that $(f, h,\overline{\chi}(\Gamma)) \in Z^2_{\overline{\chi}(\Gamma)}(K, Z(H)).$
\end{proof}

\begin{theorem}\label{4.13}
    Let $\Gamma: K \to OutDer(H)$ be an abstract kernel from $K$ to $H$, where $H$ is a commutative Poisson algebra with trivial ring structure (not necessarily abelian) and $K$ is a commutative Poisson algebra. Then the second cohomology group $H^2_{\overline{\chi}(\Gamma)}(K, Z(H))$ acts sharply transitively on the set $RExt_{\Gamma}(H, K)$ of equivalence classes of $H$ by $K$ associated with the abstract kernel $\Gamma$.
\end{theorem}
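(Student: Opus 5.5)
We follow the classical group-theoretic argument (as in \cite{RL1, RL}): define an action of $Z^2_{\overline{\chi}(\Gamma)}(K,Z(H))$ on extensions by perturbing factor systems, show it descends to cohomology classes and equivalence classes, and then use Proposition \ref{3.12} to get transitivity and freeness.

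\textbf{Defining the action.} Let $[\mathcal{E}] \in RExt_{\Gamma}(H,K)$. Choose a section $t$ of $\mathcal{E}$ with ring center factor system $(H,K,f^t,h^t,\Gamma^t)$, and let $(f,h,\overline{\chi}(\Gamma)) \in Z^2_{\overline{\chi}(\Gamma)}(K,Z(H))$. We claim that $(H,K,f^t+f,\,h^t+h,\,\Gamma^t)$ is again a ring center factor system, i.e.\ satisfies \eqref{3.3}. Since $f,h$ take values in $Z(H)$, we have $[k,f(x,y)]=0=[h(x,y),k]$, so conditions (1) and (2) of \eqref{3.3} are unchanged. Conditions (3)–(5) are linear in $(f,h)$ once $\Gamma^t$ is fixed. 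The extra terms are exactly the cocycle identities of Definition \ref{D3.5}, provided $\Gamma^t_z$ restricted to $Z(H)$ equals $\overline{\chi}(\Gamma)_z$; this holds because $\Gamma^t$ is a lifting of $\Gamma$ and $\chi\circ\sigma$ is independent of the lifting. By Proposition \ref{P3.2} this gives an extension $\mathcal{E}^{(f,h)}$. Its abstract kernel is still $\Gamma$, since $\Gamma^t$ is unchanged. We set $[(f,h)]\cdot[\mathcal{E}] = [\mathcal{E}^{(f,h)}]$.

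\textbf{Well-definedness.} Three independent choices must not matter.
\begin{enumerate}
\item \emph{The section.} If $s=t+g$ is another section, the factor system changes by the $g$-terms of \eqref{x} with $\lambda=I_H$, $\nu=I_K$. The same $g$ then gives an equivalence between the perturbed systems, because the $Z(H)$-valued perturbation commutes with $ad_{g(x)}$ terms.
\item \emph{The coboundary.} If $(f,h)$ is changed by a coboundary $(g^*,\partial g)$ with $g:K\to Z(H)$, then $(I_H,g,I_K)$ satisfies \eqref{x}. Here $[g(x),\cdot]=0$ and $\Gamma^t_x(g(y))=\overline{\chi}(\Gamma)_x(g(y))$.
\item \emph{The representative of $[\mathcal{E}]$.} An equivalence transports factor systems via some $g$, and the perturbations are carried along.
\end{enumerate}
In each case Theorem \ref{c} converts the equivalence of factor systems into an equivalence of extensions. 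The action axioms, namely that $0$ acts trivially and that actions add, are immediate from additivity of the perturbation.

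\textbf{Transitivity.} Given $[\mathcal{E}],[\mathcal{E}'] \in RExt_\Gamma(H,K)$, Proposition \ref{3.12} provides sections $t,t'$ with $\Gamma^t=\Gamma^{t'}$ and a cocycle $(f,h)$ with $f^{t'}=f^t+f$ and $h^{t'}=h^t+h$. Hence $\mathcal{E}^{(f,h)}$ and $\mathcal{E}'$ have the same factor system, so they are equivalent by Theorem \ref{c}. Note that Proposition \ref{3.12} defines $f$ as $-f^t+f^{t'}$; the sign convention must be matched here.

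\textbf{Freeness (main obstacle).} Suppose $[\mathcal{E}^{(f,h)}]=[\mathcal{E}]$. Theorem \ref{c} gives $g:K\to H$ with $(I_H,g,I_K)$ an equivalence from $(f^t+f,h^t+h,\Gamma^t)$ to $(f^t,h^t,\Gamma^t)$. Condition (2) of \eqref{x} yields $[g(x),k]=0$ for all $k$, and $H$ has trivial ring structure, so $g$ takes values in $Z(H)$. Conditions (1) and (3) then reduce precisely to $(f,h)=(g^*,\partial g)$ up to sign, with $\partial g$ computed using $\overline{\chi}(\Gamma)$. Thus $(f,h)\in B^2_{\overline{\chi}(\Gamma)}(K,Z(H))$.

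The delicate points are these:
\begin{itemize}
\item condition (4) of \eqref{x} must be consistent, which should follow from cocycle condition (4);
\item sign conventions between Definition \ref{F} and \eqref{x} must agree;
\item $\Gamma^t|_{Z(H)}$ must be shown to equal $\overline{\chi}(\Gamma)$ everywhere it is used.
\end{itemize}
I would verify these first.
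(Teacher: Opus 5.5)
Your proposal is correct and is essentially the paper's proof: you perturb the ring center factor system $(f^t,h^t,\Gamma^t)$ by a $Z(H)$-valued cocycle, show independence of the cohomology class and of the representative via Theorem~\ref{c}, get transitivity from Proposition~\ref{3.12}, and get freeness by reading off from condition (2) of \eqref{x} that $g$ is $Z(H)$-valued and then from (1) and (3) that $(f,h)$ is the coboundary of $g$. Your extra checks (independence of the section, the action axioms) are ones the paper leaves implicit, and the points you flag resolve directly: $\Gamma^t|_{Z(H)}=\overline{\chi}(\Gamma)$ because two liftings differ by inner derivations, which vanish on $Z(H)$; with your orientation of the equivalence, (1) and (3) give exactly $f=g^{*}$ and $h=\partial g$; and condition (4) of \eqref{x} is automatic, since it follows from (3) of \eqref{x}, identity (5) of \eqref{3.3} and the Leibniz rule $[xy,z]=[x,z]y+x[y,z]$ in $K$.
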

\begin{proof}
    Let $\mathcal{E}$ be a ring center extension of $H$ by $K$ associated with the abstract kernel $\Gamma$. Let $t$ be a section of $\mathcal{E}$ with corresponding ring center factor system $(H, K, f^t, h^t, \Gamma^t )$. Then $\Gamma(x)=\Gamma^t_x + IDer(H)$ for all $x \in K$. Let $(f,h,\overline{\chi}(\Gamma)) \in Z^2_{\overline{\chi}(\Gamma)}(K, Z(H)).$ It is easily seen that $(H, K, f^t +f, h^t + h,  \Gamma^t)$ is again a ring center factor system. Let $\mathcal{E} \star (f, h, \overline{\chi}(\Gamma))$ denote the corresponding ring center extension. Clearly, $\mathcal{E} \star (f, h, \overline{\chi}(\Gamma))$ is also associated with the abstract kernel $\Gamma$. Let $(f', h',\overline{\chi}(\Gamma))$ be another 2-cocycle in $Z^2_{\overline{\chi}(\Gamma)}(K, Z(H))$ such that the cohomology class 
    $$[(f, h, \overline{\chi}(\Gamma))]= (f, h, \overline{\chi}(\Gamma)) + B^2_{\overline{\chi}(\Gamma)}(K, Z(H))= [(f', h',\overline{\chi}(\Gamma))] $$ $$=(f', h',\overline{\chi}(\Gamma)) + B^2_{\overline{\chi}(\Gamma)}(K, Z(H)).$$
    Then, there is a vector space homomorphism $g:K \to Z(H) \subseteq H$ such that $$f(x,y)= g^{\ast}(x,y) + f'(x,y)$$ and $$h(x,y)=\partial g(x,y)+ h'(x,y)$$ for all $x,y \in K$. Clearly, $f^t + f = f^t + f' +g^{\ast} $ and $h^t + h= h^t + h' + \partial g$. Since $(g^{\ast}, \partial g, \overline{\chi}(\Gamma)) \in B^2_{\overline{\chi}(\Gamma)}(K, Z(H))$. Hence, the ring center factor system $(H, K, f^t +f, h^t + h, \Gamma^t)$ is equivalent to $(H, K, f^t + f', h^t + h', \Gamma^t)$. This shows that their equivalence class will be equal, that is, $[\mathcal{E}\star (f,h,\overline{\chi}(\Gamma))]= [\mathcal{E}\star (f',h',\overline{\chi}(\Gamma))]$. \\
Let $\mathcal{E}$ and $\mathcal{E}'$ be equivalent ring center extensions of $H$ by $K$ associated with the abstract kernel $\Gamma$ and $(f,h, \overline{\chi}(\Gamma)) \in Z^2_{\overline{\chi}(\Gamma)}(K, Z(H))$. 
    %By Theorem $(\ref{3.3})$ 
Then by Theorem $\ref{c}$, we have a section $t$ of $\mathcal{E}$ and a section $t'$ of $\mathcal{E}'$ such that the ring center factor systems say, $(H, K, f^t, h^t, \Gamma^t)$ and $(H, K, f^{t'}, h^{t'}, \Gamma^{t'})$ are equivalent. Hence, there is a vector space homomorphism $g: K \to H$ such that $$f^t(x,y)= g^{\ast}(x,y) + f^{t'}(x,y)$$ and $$h^t(x,y)= \partial g(x,y) + h^{t'}(x,y)$$ for all $x,y \in K$. Clearly, the ring center factor systems $( H, K, f^t+ f, h^t+ h, \Gamma^t)$ and $( H,K, f^{t'}+ f, h^{t'}+ h, \Gamma^{t'})$ are equivalent, that is, $[\mathcal{E} \star (f,h, \overline{\chi}(\Gamma))]=[\mathcal{E}' \star (f,h, \overline{\chi}(\Gamma))]$. Thus, $H^2_{\overline{\chi}(\Gamma)}(K, Z(H))$ acts on $RExt_{\Gamma}(H, K)$ by the action $\star $ defined as $$[\mathcal{E}] \star [(f,h, \overline{\chi}(\Gamma))]:=[\mathcal{E} \star (f,h, \overline{\chi}(\Gamma))].$$ 
Let $\mathcal{E}$ and $\mathcal{E}'$ be ring center extensions associated with the abstract kernel $\Gamma$. Then by Proposition $\ref{3.12}$, there is a section $t$ of $\mathcal{E}$ and a section $t'$ of $\mathcal{E}'$ such that $\Gamma^t= \Gamma^{t'}=\overline{\chi}(\Gamma)$, and there exist maps $f: K \times K \to Z(H)$ and $h: K \times K \to Z(H)$ defined by $f(x,y)= - f^t(x,y) + f^{t'}(x,y)$ and $h(x,y)= -h^t(x,y) + h^{t'}(x,y)$ such that $(f,h, \overline{\chi}(\Gamma)) \in Z^2_{\overline{\chi}(\Gamma)}(K, Z(H))$. Clearly, $[\mathcal{E}] \star [(f,h, \overline{\chi}(\Gamma))]=[\mathcal{E}']$. It shows that $\star$ is transitive. \\
To prove that the action $\star$ is sharply transitive, we need to prove that if $(f,h, \overline{\chi}(\Gamma)) \in Z^2_{\overline{\chi}(\Gamma)}(K, Z(H))$ such that $[\mathcal{E}] \star [(f,h, \overline{\chi}(\Gamma))]=[\mathcal{E}]$, then $(f,h, \overline{\chi}(\Gamma)) \in B^2_{\overline{\chi}(\Gamma)}(K, Z(H))$. Assume that $[\mathcal{E}] \star [(f,h,\overline{\chi}(\Gamma))]=[\mathcal{E}]$, then there exists a section $t$ of $\mathcal{E}$ such that the ring center factor system $(H, K, f^t, h^t, \Gamma^t)$ is equivalent to  $(H, K, f^{t}+ f, h^{t}+ h, \Gamma^{t})$. Hence, there is a vector space homomorphism $g: K \to H$ such that $$f^t(x,y)+ f(x,y) = g^{\ast}(x,y) + f^{t}(x,y),$$ and $$h^t(x,y) + h(x,y) = \partial g(x,y) + h^{t}(x,y)$$ for all $x,y \in K$, and $\Gamma_x^t(h)= ad_{g(x)}(h)+ \Gamma_x^t(h)$ for all $x \in K$ and $h \in H$. This implies $g(x)\in Z(H)$ for all $x \in K.$ Thus $f(x,y)= -g(xy)$ and $h(x,y)= \Gamma_x^t(g(y))- \Gamma_y^t(g(x)) -g([x,y])$ for all $x,y \in K$. This shows that $f=g^{\ast}$ and $h=\partial g$, where $g$ is a vector space homomorphism from $K$ to $Z(H)$. It follows that $(f,h,\overline{\chi}(\Gamma)) \in B^2_{\overline{\chi}(\Gamma)}(K, Z(H))$. This completes the proof.
\end{proof}
\begin{corollary}
Let $H$ be a commutative Poisson algebra with trivial ring structure (not necessarily abelian) and $K$ be a commutative Poisson algebra. If there is a ring center extension of $H$ by $K$ associated with the abstract kernel $\Gamma$. Then there exists a bijective correspondence between $RExt_{\Gamma}(H, K)$ and $H^2_{\overline{\chi}(\Gamma)}(K, Z(H))$.
\end{corollary}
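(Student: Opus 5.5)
This corollary should follow formally from Theorem \ref{4.13}. The idea is that a sharply transitive action of a group on a nonempty set gives a bijection between the group and the set, once a base point is fixed. So the plan is to choose a base point in $RExt_{\Gamma}(H, K)$ and use the orbit map of the action $\star$.

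First, the hypothesis says there is a ring center extension $\mathcal{E}_0$ of $H$ by $K$ with abstract kernel $\Gamma$. Hence $RExt_{\Gamma}(H, K)$ is nonempty, and I fix the class $[\mathcal{E}_0]$. I then define
\[
\Psi : H^2_{\overline{\chi}(\Gamma)}(K, Z(H)) \to RExt_{\Gamma}(H, K), \qquad \Psi\big([(f,h,\overline{\chi}(\Gamma))]\big) = [\mathcal{E}_0] \star [(f,h,\overline{\chi}(\Gamma))].
\]
This map is well defined, because the proof of Theorem \ref{4.13} showed that the action $\star$ does not depend on the representative cocycle or on the representative extension.

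Surjectivity of $\Psi$ comes from the transitivity part of Theorem \ref{4.13}, which rests on Proposition \ref{3.12}. Given any class $[\mathcal{E}']$, that argument produces a cocycle $(f,h,\overline{\chi}(\Gamma))$ with $[\mathcal{E}_0]\star[(f,h,\overline{\chi}(\Gamma))] = [\mathcal{E}']$.

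For injectivity, suppose $\Psi(c_1) = \Psi(c_2)$, where $c_i = [(f_i,h_i,\overline{\chi}(\Gamma))]$. The cleanest route is to use that $\star$ is compatible with addition of cocycles. By construction, $\mathcal{E}\star(f_1,h_1,\cdot)$ has factor system $(H,K,f^t+f_1,h^t+h_1,\Gamma^t)$. Acting on it by $(f_2-f_1, h_2-h_1, \cdot)$, using the same section, gives the factor system $(H,K,f^t+f_2,h^t+h_2,\Gamma^t)$. So $\Psi(c_1)\star(c_2-c_1) = \Psi(c_2) = \Psi(c_1)$. The freeness part of Theorem \ref{4.13} then forces $c_2 - c_1 = 0$ in $H^2_{\overline{\chi}(\Gamma)}(K, Z(H))$.

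The main obstacle I expect is this compatibility $(x\star c)\star c' = x\star(c+c')$. It is only implicit in the paper's definition of $\star$, and the difficulty is that the freeness argument is phrased for an extension with an arbitrary section. The fix is to build $\mathcal{E}_0\star(f_1,h_1,\cdot)$ through Proposition \ref{P3.2} with its canonical section $t(x)=(0,x)$. With that section the factor system is literally $(H,K,f^t+f_1,h^t+h_1,\Gamma^t)$, and $\Gamma^t$ is unchanged, so the second action adds cocycles directly. This gives the bijection, although it is not canonical, since it depends on the choice of $[\mathcal{E}_0]$.
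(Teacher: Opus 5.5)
Your proposal is correct and follows the paper's proof. You fix a base class $[\mathcal{E}_0]$, take the orbit map $c \mapsto [\mathcal{E}_0]\star c$, and get surjectivity from transitivity and injectivity from freeness in Theorem \ref{4.13}. Your one addition is to check explicitly, using the canonical section $t(x)=(0,x)$ of Proposition \ref{P3.2}, that $([\mathcal{E}]\star c)\star c' = [\mathcal{E}]\star(c+c')$; the paper uses this tacitly when it passes from $[\mathcal{E}]\star c=[\mathcal{E}]\star c'$ to $[\mathcal{E}]\star(c-c')=[\mathcal{E}]$.
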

\begin{proof}
    Define a map $\rho: H^2_{\overline{\chi}(\Gamma)}(K, Z(H)) \to RExt_{\Gamma}(H, K)$ by $$\rho([(f,h,\overline{\chi}(\Gamma))])= [\mathcal{E}] \star [(f,h,\overline{\chi}(\Gamma))].$$ We claim that $\rho$ is a bijection. For injectivity, suppose $\rho([(f,h,\overline{\chi}(\Gamma))])=\rho ([(f',h',\overline{\chi}(\Gamma))])$. Then $$ [\mathcal{E}]\star [(f,h,\overline{\chi}(\Gamma))]=[\mathcal{E}] \star [(f',h', \overline{\chi}(\Gamma))],$$ which implies $$[\mathcal{E}] \star \big([(f,h,\overline{\chi}(\Gamma))]- [(f',h', \overline{\chi}(\Gamma))]\big)=[\mathcal{E}].$$ By Theorem \ref{4.13}, the group $H^2_{\overline{\chi}(\Gamma)}(K, Z(H))$ acts sharply transitively on $RExt_{\Gamma}(H, K)$. Hence, $(f,h,\overline{\chi}(\Gamma))-(f',h',\overline{\chi}(\Gamma))\in B^2_{\overline{\chi}(\Gamma)}(K, Z(H))$, and therefore $[(f,h,\overline{\chi}(\Gamma))]=[(f',h',\overline{\chi}(\Gamma))]$. \\
 For surjectivity, let $[\mathcal{E}] \in RExt_{\Gamma}(H, K)$. By sharp transitivity, there exists a unique class $[(f,h,\overline{\chi}(\Gamma))]\in H^2_{\overline{\chi}(\Gamma)}(K, Z(H))$ such that $$[\mathcal{E}'] \star [(f,h,\overline{\chi}(\Gamma))]  =[\mathcal{E}].$$ Therefore, $\rho([(f,h,\overline{\chi}(\Gamma))])= [\mathcal{E}]$. This establishes surjectivity and completes the proof.

\end{proof}

\noindent{\bf Acknowledgment:} The first-named and third-named authors are thankful to the National Board for Higher Mathematics (NBHM) for providing the project ``Linear Representation of Multiplicative Lie Algebra" (02011/19/2023/NBHM (R.P)/R~\& D-II/5954). 

\noindent{\bf  Data Availability declaration:} Not applicable.

\noindent{\bf Competing Interests:} The authors have no competing interests to declare that are relevant to the content of this article.

\noindent{\bf Funding Declaration:} The first-named author is supported by the National Board for Higher Mathematics (NBHM), project ``Linear Representation of Multiplicative Lie Algebra" (02011/19/2023/NBHM (R.P)/R~\& D-II/5954). The second and third named authors have no relevant financial or non-financial interests to disclose.


\begin{thebibliography}{9}
    \bibitem{AM0} A. L. Agore and G. Militaru, Extending Structures: Fundamentals and Applications, CRC Press, (2019).
    \bibitem{AM}
A. L. Agore and G. Militaru, Jacobi and Poisson algebras, J. Noncommut. Geom. 9 (4) (2015),  1295-1342.
\bibitem{AM1}
A. L. Agore and G. Militaru, The global extension problem, crossed products and co-flag non-commutative Poisson algebras,  J. Algebra 426 (2015), 1-31.

\bibitem{B1}
K. S. Brown,
Cohomology of Groups,
Graduate Texts in Mathematics, Vol. 87, Springer-Verlag (1982).

\bibitem{BY}
Y. H. Bao and Y. Ye, Cohomology structures of a Poisson algebra: I, J. Algebra Appl. 15 (2) (2016), 1650034, 17 pp.

\bibitem{BY1}
Y. H. Bao and Y. Ye, Cohomology structures of a Poisson algebra: II, Sci. China Math. 64 (5) (2021), 903-920.

  \bibitem{CM}
A. J. Calderón Martin, On the structure of split noncommutative Poisson algebras, Linear Multilinear Algebra  60 (2012), 775-785.
\bibitem{CM1}
A. J. Calderón Martin, On extended graded Poisson algebras, Linear Algebra Appl. 439 (2013), 879-892.

\bibitem{PC}
P. Caressa, Examples of Poisson modules I, Rend. Circ. Mat. Palermo (2) 52 (2003), 419-452.
\bibitem{JM}
J. M. Casas, T. Datuashvili, Noncommutative Leibniz-Poisson algebras, Comm. Algebra 34 (2006), 2507-2530. 
\bibitem{GJ}
	G. J. Ellis, On five well-known commutator identities, J. Aust. Math. Soc. (Series A) 54 (1993), 1-19.

\bibitem{EM}
S. Eilenberg and S. MacLane, Cohomology Theory in Abstract Groups I, Ann. of Math. 48 (1) (1947), 51-78.

\bibitem{FL}
D.~R. Farkas and G.~Letzter, Ring theory from symplectic geometry, J. Pure Appl. Algebra 125 (1998), 155-190.

\bibitem{GR}
M. Goze, E. Remm, Poisson algebras in terms of non-associative algebras, J. Algebra 320 (2008), 294-317.

\bibitem{MG} M. Gerstenhaber, The Cohomology Structure of an Associative Ring, Ann. of Math. 78(2) (1963), 267-288.

\bibitem{HS}
G. Hochschild and J.-P. Serre, Cohomology of Lie Algebras, Ann. of Math. 57 (3) (1953), 591-603.

\bibitem{DA}
D. A. Jordan, Finite-dimensional simple Poisson modules, Algebr. Represent. Theory 13 (2010), 79-101.
\bibitem{FK}
F. Kubo, Finite-dimensional non-commutative Poisson algebras, J. Pure Appl. Algebra 113 (1996), 307-314.
\bibitem{FK1}
F. Kubo and F.  Mimura, Extensions of Poisson algebras by derivations, Hiroshima Math. J. 20 (1) (1990), 37-46.

  \bibitem{RL1} R. Lal, Algebra 2, Springer (2017).
	\bibitem{RL} R. Lal, Algebra 4, Springer (2021).

\bibitem{LP} J. L. Loday, T. Pirashvili, Universal enveloping algebras of Leibniz algebras and (co)homology, Math. Ann. 296 (1993), 139-158.

\bibitem{LW}
J. Lü, X. Wang, G. Zhuang, Universal enveloping algebras of Poisson Hopf algebras, J. Algebra 426 (2015), 92-136.
\bibitem{ML}
L. Makar-Limanov, I. Shestakov, Polynomial and Poisson dependence in free Poisson algebras and free Poisson fields, J. Algebra 349 (2012), 372-379. 
  
\bibitem{NR}
A. Nijenhuis and R. W. Richardson, Cohomology and Deformations in Graded Lie Algebras, Bull. Amer. Math. Soc. 72 (1966), 1-29.
  
	\bibitem{MS}	M. S. Pandey and S. K. Upadhyay, Theory of extension of multiplicative Lie algebras, J. Lie Theory 31 (2021), 637-658.


\bibitem{VR} V. Rubtsov, and R. Suchánek, Lectures on Poisson algebras, Tutor. Sch. Workshops Math. Sci., Birkhäuser/Springer, Cham, (2023), 41-116.
\bibitem{UU}
U. Umirbaev, Universal enveloping algebras and universal derivations of Poisson algebras, J. Algebra 354 (2012), 77-94.

\end{thebibliography}
\end{document}